\documentclass[12pt,reqno]{amsart}

%

\headheight=6.15pt
\textheight=8.75in
\textwidth=6.5in
\oddsidemargin=0in
\evensidemargin=0in
\topmargin=0in

\usepackage{epsfig}
\usepackage{amsmath, amsthm, amsfonts, amssymb, mathrsfs}
\usepackage{tikz-cd}
\usepackage{graphicx}

\numberwithin{equation}{section}
\DeclareMathOperator\Hom{Hom}
\DeclareMathOperator\Ext{Ext}

\newcommand{\R}{{\mathbb R}}

\newcommand{\Z}{{\mathbb Z}}

\newcommand{\K}{{\mathbb K}}

\newtheorem{theo}{{\sc \bf Theorem}}[section]

\newtheorem{prop}[theo]{{\sc \bf Proposition}}

\begin{document}

\title{A note on Quantum Odometers}

\author[Klimek]{Slawomir Klimek}
\address{Department of Mathematical Sciences,
Indiana University-Purdue University Indianapolis,
402 N. Blackford St., Indianapolis, IN 46202, U.S.A.}
\email{sklimek@math.iupui.edu}

\author[McBride]{Matt McBride}
\address{Department of Mathematics and Statistics,
Mississippi State University,
175 President's Cir., Mississippi State, MS 39762, U.S.A.}
\email{mmcbride@math.msstate.edu}

\author[Peoples]{J. Wilson Peoples}
\address{Department of Mathematics,
Pennsylvania State University,
107 McAllister Bld., University Park, State College, PA 16802, U.S.A.}
\email{jwp5828@psu.edu}

\thanks{}

\date{\today}

\begin{abstract}
We discuss various aspects of noncommutative geometry of smooth subalgebras of Bunce-Deddens-Toeplitz Algebras.
\end{abstract}

\maketitle
\section{Introduction}
In noncommutative geometry it is often necessary to consider dense $*$-subalgebras of C$^*$-algebras, in particular, in connection with cyclic cohomology or with the study of unbounded derivations on C$^*$-algebras  \cite{Connes}. Smooth subalgebras of noncommutative spaces are also naturally present in studying spectral triples.
If C$^*$-algebras are thought of as generalizations of topological spaces, then dense subalgebras may be regarded as specifying additional structures on the underlying space, like a smooth structure.
In analogy with the algebras of smooth functions on a compact manifold, such a smooth subalgebra should be closed under holomorphic functional calculus of all elements and under smooth-functional calculus of self-adjoint elements. It should also be complete with respect to a locally convex algebra topology, see \cite{BC}.

The purpose of this note is to study smooth subalgebras $A_S^\infty$ of Bunce-Deddens-Toeplitz C$^*$-algebras $A_S$ associated to a supernatural number $S$, objects that capture their smooth structure. This work is a continuation of, and heavily relies on, our previous papers on the subject of smooth subalgebras, in particular \cite{KMP2}, \cite{KMP3} which investigated smooth structures on Bunce-Deddens algebras, the algebras of compact operators, and the Toeplitz algebra.

Bunce-Deddens algebras $B_S$ \cite{BD1}, \cite{BD2}, are crossed-product C$^*$-algebras obtained from odometers and Bunce-Deddence-Toeplitz algebras $A_S$ are their extensions by compact operators $\mathcal{K}$:  
$$0 \rightarrow \mathcal K \rightarrow A_S \rightarrow B_S \rightarrow 0.$$
Due to the topology of odometers \cite{D}, which are Cantor sets with a minimal action of a homeomorphism, the smooth subalgebras are naturally equipped with inductive limit Frechet (LF) topology.

Using a version of the Toeplitz map \cite{KMRSW2}, we build smooth subalgebras $A_S^\infty$ from Toeplitz operators with smooth symbols and from smooth compact operators. Smooth compact operators, introduced in \cite{Ph}, were studied in details in \cite{KMP3}. Smooth Bunce-Deddens algebras $B_S^\infty$, the symbols of Toeplitz operators, were introduced in \cite{KMP3}. We explicitly construct appropriate LF structures on $A_S^\infty$ and prove that those algebras are closed under holomorphic calculus so that they have the same K-Theory as their corresponding C$^*$-algebra closures, and we verify that they are closed under smooth functional calculus of self-adjoint elements.

We also focus on describing continuous derivations \cite{S} on smooth subalgebras $A_S^\infty$. In particular, using results from \cite{KMP2}, \cite{KMP3}, we classify derivations on $A_S^\infty$ and show that, up to inner derivations with compact range, they are lifts of derivations on $B_S^\infty$, the factor algebra of $A_S^\infty$ modulo the ideal $\mathcal{K}^\infty$ of smooth compact operators. Since many derivations on $B_S^\infty$ are themselves inner, the factor space of continuous inner derivations on $A_S^\infty$ modulo inner derivations turns out to be one-dimensional.
Additionally we shortly describe K-Theory and K-Homology of $A_S$.

The paper is organized as follows. Preliminary section 2 contains our notation and a short review of relevant results from \cite{KMRSW2} and \cite{KMP2}. In section 3 we review smooth compact operators and introduce and study smooth Bunce-Deddens-Toeplitz. Section 4 contains a detailed discussion of stability of $A_S^\infty$ under  both the holomorphic functional calculus, and the smooth calculus of self-adjoint elements. In sections 5 we investigate the structure and classifications of derivations. Finally, section 6 contains remarks on K-Theory and K-Homology.

\section{Preliminaries}
\subsection{Supernatural Numbers}
A {\it supernatural number} $S$ is defined as the formal product:
\begin{equation*} 
S= \prod_{p-\textrm{prime}} p^{\varepsilon_p}, \;\;\; \varepsilon_p \in\{0,1, \cdots, \infty\}\,.
\end{equation*}
We will assume $\sum \varepsilon_p = \infty$ so that $S$ an infinite supernatural number.
We define $S$-adic ring:
 $$\Z/S\Z = \prod\limits_{\substack{p-\textrm{prime}}}  \Z/{p^{\varepsilon_p}}\Z.$$ 
 Here if $S=p^\infty$ for a prime p, then $\Z/S\Z$ is equal to $\Z_p$, the ring of $p$-adic integers.
 
 If the ring $\Z/S\Z$ is equipped with the Tychonoff topology it forms a compact, Abelian topological ring with unity, though only the group structure is relevant for this paper. In addition, if $S$ is an infinite supernatural number then $\Z/S\Z$ is a Cantor set.
 
The ring $\Z / S \Z$ contains a dense copy of $\Z$ by the following indentification:
\begin{equation}\label{ZinZmodSZ}
\Z\ni k\leftrightarrow \{k\ (\textrm{mod } p^{\varepsilon_p})\}\in  \prod\limits_{\substack{p-\textrm{prime}}}  \Z/{p^{\varepsilon_p}}\Z.
\end{equation}

\subsection{Hilbert Spaces}
We use two concrete Hilbert spaces for this paper: $H=\ell^2(\Z)$ and $H_+=\ell^2(\Z_{\ge0})$ Let $\{E_l\}_{l\in\Z}$  and $\{E^+_k:k\ge0\}$ be the canonical bases for $H$ and $H_+$ respectively.
We need the following shift operator $V:H\to H$ on $H$ and the unilateral shift operator $U:H_+\to H_+$ on $H_+$: 
\begin{equation*}
VE_l = E_{l+1} \ \textrm{ and }\ UE^+_k = E^+_{k+1}.
\end{equation*}
Notice that $V$ is a unitary while $U$ is an isometry.  We have:
\begin{equation*}
[U^*,U]=P_0,
\end{equation*}
where $P_0$ is the orthogonal projection onto the one-dimensional subspace spanned by $E^+_0$.
 
For a continuous function $f\in C(\Z/S\Z)$ we define two operators $m_f:H\to H$ and $M_f:H_+\to H_+$ via formulas:
\begin{equation*}
m_fE_l = f(l)E_l\ \textrm{ and }\ M_fE^+_k = f(k)E^+_k.
\end{equation*} 
In those formulas we considered integers $k,l$ as elements of $\Z/S\Z$ using identification \eqref{ZinZmodSZ}.
Since $\Z$ is a dense subgroup of $\Z/S\Z$ we obtain immediately that
\begin{equation*}
\|m_f\|=\|M_f\|= \underset{l\in\Z}{\textrm{sup }}|f(l)| = \underset{k\in\Z_{\ge0}}{\textrm{sup }}|f(k)| =\underset{x\in \Z/S\Z}{\textrm{sup }}|f(x)| = \|f\|_\infty.
\end{equation*}
The algebras of operators generated by the $m_f$'s or by the $M_f$'s are thus isomorphic to $C(\Z/S\Z)$ and so they carry all the information about the space $\Z/S\Z$, while operators $U$ and $V$ reflect the odometer dynamics $\varphi$ on $\Z/S\Z$ given by:
\begin{equation}\label{beta_def}
\varphi(x)=x+1.
\end{equation}
The relation between those operators is:
\begin{equation}\label{Vmf_rel}
V^{-1}m_fV=m_{f\circ\varphi}.
\end{equation}
Similarly we have:
\begin{equation}\label{UMf_rel}
M_fU=UM_{f\circ\varphi}.
\end{equation}
There is also another, less obvious relation between $U$ and the $M_f$'s, namely:
\begin{equation}\label{M_with_P_zero}
M_fP_0=P_0M_f=f(0)P_0.
\end{equation}
\subsection{Algebras}
Following \cite{KMRSW2}, we define the Bunce-Deddens and Bunce-Deddens-Toeplitz algebras, $B_S$ and $A_S$ respectively, to be the following C$^*$-algebras: $B_S$ is generated by the operators $V$ and $m_f$:
\begin{equation*}
B_S=C^*\{V,m_f: f\in C(\Z/S\Z)\}
\end{equation*}
while $A_S$ is generated by the operators $U$ and $M_f$:
\begin{equation*}
A_S = C^*\{U,M_f: f\in C(\Z/S\Z)\}.
\end{equation*}
The algebra $A_S$ contains the projection $P_0$ and in fact all compact operators $\mathcal{K}$ and the quotient $A_S/\mathcal{K}$ can be naturally identified with $B_S$, see \cite{KMP2}. Let $\tau $ be the natural homomorphism $\tau:A_S\to B_S$.

The algebra $B_S$ is isomorphic with the crossed product algebra:
\begin{equation*}
B_S\cong C(\Z/S\Z)\rtimes_\varphi \Z\,.
\end{equation*} 
and is simple \cite{KMP2}. Consequently it is isomorphic the universal C$^*$-algebra with generators $v$ and $f$, where $v$ is unitary, $f\in C(\Z/S\Z)$, with relations (compare with \eqref{Vmf_rel}):
\begin{equation*}
v^{-1}fv=f\circ\varphi.
\end{equation*}
Interestingly, algebras $A_S$ can also be described in terms of generators and relations as follows.
\begin{prop} The universal C$^*$-algebra $A$ with generators $u$ and $f$, such that $u$ is an isometry, $f\in C(\Z/S\Z)$, with relations (compare with \eqref{Vmf_rel} and \eqref{M_with_P_zero}):
\begin{equation*}
fu=u\,(f\circ\varphi)\ \textrm{ and }\ fp_0=f(0)p_0,
\end{equation*}
where $[u^*,u]=p_0$, is isomorphic with $A_S$.
\end{prop}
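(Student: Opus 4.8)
The plan is to construct an explicit $*$-homomorphism $\Phi\colon A\to A_S$ from the universal algebra to the concrete one, show it is surjective, and then build an inverse by exhibiting a $*$-homomorphism $\Psi\colon A_S\to A$ using a universal property of $A_S$ that we already have in hand. First I would check that the concrete operators $U$ and $M_f$ on $H_+$ satisfy the defining relations of $A$: equation \eqref{UMf_rel} gives $M_fU=UM_{f\circ\varphi}$, equation \eqref{M_with_P_zero} gives $M_fP_0=f(0)P_0$, and $[U^*,U]=P_0$ is recorded in the Preliminaries; moreover $U$ is an isometry and $f\mapsto M_f$ is a unital $*$-homomorphism from $C(\Z/S\Z)$ by the norm computation $\|M_f\|=\|f\|_\infty$. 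Hence by the universal property of $A$ there is a unique $*$-homomorphism $\Phi\colon A\to A_S$ with $\Phi(u)=U$, $\Phi(f)=M_f$. Since $U$ and the $M_f$ generate $A_S$ as a C$^*$-algebra, $\Phi$ is surjective.

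For injectivity I would produce a homomorphism in the other direction. The cleanest route is to use the extension structure: the proposition-level facts quoted in the Preliminaries say $A_S$ sits in $0\to\mathcal K\to A_S\to B_S\to 0$ with $B_S$ the crossed product, and $B_S$ itself is a \emph{universal} C$^*$-algebra on a unitary $v$ and $f\in C(\Z/S\Z)$ with $v^{-1}fv=f\circ\varphi$. Inside the universal algebra $A$, set $p_0=[u^*,u]\ge 0$; the relation $fp_0=f(0)p_0$ together with $fu=u(f\circ\varphi)$ forces (the standard Toeplitz-style computation) that $p_0$ is a projection, that the ideal $I\subset A$ it generates is a copy of the compacts $\mathcal K$ with matrix units $u^m p_0 (u^*)^n$, and that in the quotient $A/I$ the image of $u$ becomes a unitary $v$ still satisfying $v^{-1}fv=f\circ\varphi$. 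By the universal property of $B_S$ this yields a surjection $B_S\to A/I$, and a short diagram chase against the extension for $A_S$ shows $A/I\cong B_S$ and $I\cong\mathcal K$, so $A$ fits in the \emph{same} extension $0\to\mathcal K\to A\to B_S\to 0$. Now $\Phi$ is a $*$-homomorphism of extensions which is an isomorphism on the ideal $\mathcal K$ (it sends $p_0\mapsto P_0$ and the matrix units to matrix units, hence is faithful on $\mathcal K$, which is simple) and induces the identity on the quotient $B_S$; a standard five-lemma-type argument for C$^*$-extensions then forces $\Phi$ to be injective, hence an isomorphism.

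The main obstacle is the middle step: verifying purely from the two relations $fu=u(f\circ\varphi)$ and $fp_0=f(0)p_0$ (plus $u$ isometric) that $p_0=[u^*,u]$ is genuinely a projection and that it generates an honest copy of $\mathcal K$ rather than something smaller or degenerate. This requires a careful algebraic manipulation: one shows $u^*p_0=0=p_0 u$, deduces $p_0^2=p_0=p_0^*$, then checks that $e_{mn}:=u^m p_0 (u^*)^n$ satisfy the $\mathcal K$ matrix-unit relations, and finally that no further collapse occurs — the faithfulness of the representation on $H_+$ (via $\Phi$) is exactly what prevents collapse, so logically one proves $p_0\neq 0$ and the matrix-unit relations abstractly, gets a surjection $\mathcal K\twoheadrightarrow I$, and uses simplicity of $\mathcal K$ together with $\Phi|_I$ being nonzero to conclude it is an isomorphism. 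Everything else — the universal property applications and the diagram chase — is routine once this structural lemma about $p_0$ is in place.
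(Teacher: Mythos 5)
Your proposal is correct, but it proves the proposition by a genuinely different route than the paper. The paper argues representation-theoretically: it takes an arbitrary irreducible representation of the universal algebra $A$, and shows that either $u$ acts as a unitary (so the representation factors through $B_S$, a quotient of $A_S$) or the kernel of $u^*$ is nonzero, in which case the vectors $E^+_k=U^kE^+_0$ are forced by the relations to be an eigenbasis for the $M_f$'s with $M_fE^+_k=f(k)E^+_k$, reproducing the defining representation of $A_S$; since every irreducible representation is thus dominated by the defining one, the universal norm coincides with the concrete norm and $A\cong A_S$. You instead work structurally inside $A$: you verify $p_0u=0=u^*p_0$, build the matrix units $e_{mn}=u^mp_0(u^*)^n$, use the relation $fp_0=f(0)p_0$ (which gives $fe_{mn}=f(m)e_{mn}$, so the closed span of the $e_{mn}$ really is the ideal generated by $p_0$) to identify that ideal with $\mathcal{K}$, identify $A/I\cong B_S$ via the crossed-product universal property (injectivity coming either from simplicity of $B_S$ or from composing with the map induced by $\Phi$), and finish with a five-lemma argument for the extension $0\to\mathcal{K}\to A\to B_S\to 0$ against $0\to\mathcal{K}\to A_S\to B_S\to 0$. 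Your approach is essentially a Coburn-type uniqueness argument: it is longer and requires the structural lemma about $p_0$ that you correctly flag as the main work, but it buys an explicit abstract identification of $A$ as an extension of $B_S$ by $\mathcal{K}$ without classifying representations; the paper's argument is shorter but leans on knowing the irreducible representations and on $B_S$ being the Calkin-type quotient of $A_S$. Both are complete in outline, and the steps you leave as ``routine'' (matrix-unit relations, nondegeneracy via $\Phi(p_0)=P_0\neq0$, the induced isomorphism on quotients) are indeed routine.
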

\begin{proof} We will show that any irreducible representation of $A$ either factors through $B_S$ or is isomorphic to the defining representation of $A_S$. Since $B_S\cong A_S/\mathcal{K}$ is a factor algebra, the defining representation of $A_S$ dominates the factor representation and so, by universality, $A$ is isomorphic to $A_S$.

Consider an irreducible representation of $A$ and let $U$ represents $u$ and $M_f$ represent $f$. Notice that $P_0:=I-UU^*$ is the orthogonal projection onto the kernel of $U^*$. If that kernel is zero then $U$ is unitary and $U$, $M_f$ give a representation of $B_S$ by universality, since they satisfy the crossed-product relations.

If the kernel of $U^*$ is not zero, pick a unit vector $E^+_0$ such that $U^*E^+_0=0$. Since $U$ is an isometry, the set $\{E^+_k\}$, $k=0,1,\ldots$ is orthonormal, where $E^+_k:=U^kE^+_0$. Moreover, we have by using relations:
\begin{equation*}
M_fE^+_0=M_fP_0E^+_0=f(0)E^+_0,
\end{equation*}
and similarly:
\begin{equation*}
M_fE^+_k=M_fU^kE^+_0=U^kM_{f\circ\varphi^k}E^+_0=f(k)U^kE^+_0=f(k)E^+_k.
\end{equation*}
It follows that vectors $\{E^+_k\}$ span an invariant subspace and so, by irreducibility, $\{E^+_k\}$ is an orthonormal basis. Since $U$ is the unilateral shift in this basis, we reproduced the defining representation of $A_S$, finishing the proof.
\end{proof}

\subsection{Toeplitz Map}
Next we discuss the key relation between the two algebras $A_S$ and $B_S$.
Let $P_{\ge0}:H\to H_+$ be the following map from $H$ onto $H_+$ given by
\begin{equation*}
P_{\ge0}E_k=\left\{
\begin{aligned}
&E_k^+ &&\textrm{ if } k\ge0\\
&0 &&\textrm{ if } k<0.
\end{aligned}\right.
\end{equation*}
We also need another map $s:H_+\to H$ given by:
\begin{equation*}
sE^+_k  =E_k.
\end{equation*}
Define the map $T:B(H)\to B(H_+)$, between the spaces of bounded operators on $H$ and $H_+$, in the following way: given $b\in B(H)$ we set
\begin{equation*}
T(b) = P_{\ge0}bs.
\end{equation*}
$T$ is known as a Toeplitz map.  It has the following properties \cite{KM6}:
\begin{enumerate} 
\item $T(I_H) = I_{H_+}$. 
\item $T(bV^n) = T(b)U^n$ and $T(V^{-n}b) = (U^*)^nT(b)$ for $n\ge0$ and all $b\in B(H)$.
\item $T(bm_f) = T(b)M_f$ and $T(m_f\,b) = M_fT(b)$ for all $f\in C(\Z/S\Z)$ and all $b\in B(H)$
\item $T(b^*) = T(b)^*$ for all $b\in B(H)$.
\end{enumerate}
Consequently, it follows that $T$ is a $*$-preserving map from $B_S$ to $A_S$.
If $\tau $ is the natural homomorphism from $A_S$ to $B_S$ then we have 
$$\tau T(b) =b$$ for all $b\in B_S$. It follows that for any $a$ in $A_S$ there is a compact operator $c$ such that we have a decomposition:
\begin{equation}\label{ASdecomp}
a=T(b)+c,
\end{equation}
where $b=\tau(a)\in B_S$.
One can verify that if $b$ is an element in $B_S$ then $T(b)$ is compact if and only if $b=0$. This implies the uniqueness 
of the above decomposition \eqref{ASdecomp}.

\subsection{Fourier Series}
There are natural one-parameter groups of automorphisms of $B_S$ and $A_S$ respectively.  They are given by the formulas:
\begin{equation*}
\rho^{\mathbb L}_\theta(b) = e^{2\pi i\theta\mathbb{L}}be^{-2\pi i\theta\mathbb{L}}\ \textrm{ for }b\in B_S\ \textrm{ and }\ 
\rho^\K_\theta(a) = e^{2\pi i\theta\K}ae^{-2\pi i\theta\K}\ \textrm{ for }a\in A_S,
\end{equation*}
where $\theta\in\R/\Z$. Here we using the following diagonal label operators on $H$ and $H_+$ respectively:  
$$\mathbb{L} E_l = lE_l\ \textrm{ and }\ \K E^+_k = kE^+_k.$$ 
We have the following relations:
\begin{equation*}
\rho^{\mathbb L}_\theta(V)=e^{2\pi i\theta}V\textrm{ and } \rho^{\mathbb L}_\theta(m_f)=m_f\,.
\end{equation*}
Automorphisms $\rho^\K_\theta$ satisfy analogous relations and the extra relation on $U^*$, namely
\begin{equation*}
\rho^\K_\theta(U^*)=e^{-2\pi i\theta}U^*\,.
\end{equation*}

Define $E:B_S\to C^*\{m_f:f\in C(\Z/S\Z)\}\cong C(\Z/S\Z)$ via
\begin{equation*}
E(b) =\int_0^1\rho^{\mathbb L}_\theta(b)\,d\theta\,.
\end{equation*}
It's easily checked that $E$ is an expectation on $B_S$.  For a $b\in B_S$ we define the {\it $n$-th Fourier coefficient} $b_n$ by the following:
\begin{equation*}
b_n=e(V^{-n}b)= \int_0^1\rho_\theta(V^{-n}b)\,d\theta = \int_0^1e^{-2\pi in\theta}V^{-n}\rho_\theta(b)\,d\theta.
\end{equation*}
From this definition, it's clear that $b_n\in C^*\{M_f:f\in C(\Z/S\Z)\}$ so we can write $b_n = m_{f_n}$ for some $f_n\in C(\Z/S\Z)$.  We define an expectation, $E$ on $A_S$, in a similar fashion:
\begin{equation*}
E:A_S\to C^*\{M_f:f\in C(\Z/S\Z)\}\cong C(\Z/S\Z)\,.
\end{equation*}
For an $a\in A_S$, its $n$-th Fourier coefficient $a_n$ is also defined similarly and also $a_n=M_{f_n}$ for some $f_n\in C(\Z/S\Z)$. 
Additionally, notice that we have the following relation with the Toeplitz map:
\begin{equation*}
(T(b))_n = T(b_n)\ \textrm{ for all }n\,.
\end{equation*}

\section{Smooth Subalgebras}
\subsection{Smooth Compact Operators}
We begin by reviewing properties of smooth compact operators from \cite{KMP3}.
Let $\mathcal{K}$ be the algebra of compact operators on $H_+$.  The orthonormal basis $\{E^+_k\}_{k\ge0}$ of $H_+$ determines a system of units $\{P_{ks}\}_{k,s\ge0}$ in $\mathcal{K}$ that satisfy the following relations:
\begin{equation*}
P_{ks}^* = P_{sk} \quad\textrm{and}\quad P_{ks}P_{rt} = \delta_{sr}P_{kt}\,,
\end{equation*}
where $\delta_{sr}=1$ for $s=r$ and is equal to zero otherwise.  The set of {\it smooth compact operators} with respect to $\{E^+_k\}$ is the set of operators of the form
\begin{equation*}
c = \sum_{k,s\ge0}c_{ks}P_{ks}\,,
\end{equation*}
so that the coefficients $\{c_{ks}\}_{k,s\ge0}$ are rapidly decaying (RD).  We denote the set of smooth compact operators by $\mathcal{K}^\infty$.

We now introduce norms on $\mathcal{K}^\infty$. They are constructed using the following useful derivation on $\mathcal{K}^\infty$:
\begin{equation*}
d_\K(c) = [\K,c]\,.
\end{equation*}
Clearly $d_\K$ is linear and satisfies the the Leibniz rule as $d_\K$ is a commutator. 
We define $\|\cdot\|_{M,N}$ norms on $\mathcal{K}^\infty$ by the following formulas:
\begin{equation*}
\|c\|_{M,N} = \sum_{j=0}^M\begin{pmatrix} M \\ j \end{pmatrix}\|d_\K^j(c)(I+\K)^N\|\,,
\end{equation*}
with $\delta_\K^0(c):=c$. 
The following proposition from \cite{KMP3} summarizes the basic properties of $\|\cdot\|_{M,N}$ norms.
\begin{prop}\label{MN-norm_basics}
Let $a$ and $b$ be bounded operators in $H$, then
\begin{enumerate}
\item $a\in\mathcal{K}^\infty$ if and only if $\|a\|_{M,N}<\infty$ for all nonnegative integers $M$ and $N$.
\item $\|a\|_{M+1,N} = \|a\|_{M,N} + \|d_\K(a)\|_{M,N}$.
\item $\|a\|_{M,N} \le \|a\|_{M,N+1}$.
\item $\|ab\|_{M,N} \le \|a\|_{M,0}\|b\|_{M,N} \le \|a\|_{M,N}\|b\|_{M,N}$.
\item $\|d_\K(a)\|_{M,N}\le \|a\|_{M+1,N}$.
\item $\|a^*\|_{M,N}\le \|a\|_{M+N,N}$.
\item $\mathcal{K}^\infty$ is a complete topological vector space.
\end{enumerate}
\end{prop}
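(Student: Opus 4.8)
The plan is to reduce all seven items to explicit computations with the matrix units $P_{ks}$ together with two binomial identities (Pascal and Chu--Vandermonde). I would begin by recording the action of the two operations appearing in the norm: since $\K P_{ks}=kP_{ks}$ and $P_{ks}\K=sP_{ks}$, we get $d_\K(P_{ks})=(k-s)P_{ks}$ and $P_{ks}(I+\K)^N=(1+s)^NP_{ks}$, so for $c=\sum_{k,s}c_{ks}P_{ks}$ the operator $d_\K^j(c)(I+\K)^N$ has matrix $\big((k-s)^j(1+s)^Nc_{ks}\big)_{k,s}$. Combined with the elementary two-sided estimate $\sup_{k,s}|b_{ks}|\le\big\|\sum_{k,s}b_{ks}P_{ks}\big\|\le\big(\sum_{k,s}|b_{ks}|^2\big)^{1/2}$, this is the bridge between the $\|\cdot\|_{M,N}$ norms and the rapid-decay condition, giving item (1): if every $\|c\|_{M,N}$ is finite, the lower estimate forces $|k-s|^j(1+s)^N|c_{ks}|$ bounded for all $j,N$, and expanding $(1+k)^M\le(1+s+|k-s|)^M$ converts this into $(1+k)^M(1+s)^N|c_{ks}|$ bounded for all $M,N$, i.e. rapid decay; conversely, rapid decay makes each Hilbert--Schmidt bound $\sum_{k,s}\big|(k-s)^j(1+s)^Nc_{ks}\big|^2$ converge, so every $\|c\|_{M,N}<\infty$ (this also confirms that the formally defined products $d_\K^j(c)(I+\K)^N$ are genuinely bounded operators when $c\in\mathcal{K}^\infty$).

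The ``soft'' items come next. Item (2) is exactly Pascal's rule $\binom{M+1}{j}=\binom{M}{j}+\binom{M}{j-1}$ applied to the defining sum, after reindexing $d_\K^{j+1}=d_\K^j\circ d_\K$; item (5) then follows at once because $\|a\|_{M,N}\ge 0$. For item (3) I would write $(I+\K)^N=(I+\K)^{N+1}(I+\K)^{-1}$ and use that $(I+\K)^{-1}$ is a contraction (as $I+\K\ge I$) to get $\|d_\K^j(a)(I+\K)^N\|\le\|d_\K^j(a)(I+\K)^{N+1}\|$ term by term, then sum; the bound $\|a\|_{M,0}\le\|a\|_{M,N}$ used in item (4) is the case $N=0$ iterated.

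The combinatorial heart is items (4) and (6). For (4): the Leibniz rule for the derivation $d_\K$ gives $d_\K^j(ab)=\sum_i\binom{j}{i}d_\K^i(a)d_\K^{j-i}(b)$; multiplying on the right by $(I+\K)^N$, taking operator norms, and summing against $\binom{M}{j}$, I would reindex by $p=i$, $q=j-i$ so that the coefficient of $\|d_\K^p(a)\|\,\|d_\K^q(b)(I+\K)^N\|$ becomes $\binom{M}{p+q}\binom{p+q}{p}=\binom{M}{p}\binom{M-p}{q}\le\binom{M}{p}\binom{M}{q}$ for $p+q\le M$, collapsing the estimate into $\|a\|_{M,0}\|b\|_{M,N}$. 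For (6): from $d_\K^j(a^*)=(-1)^jd_\K^j(a)^*$ and self-adjointness of $(I+\K)^N$ one has $\|d_\K^j(a^*)(I+\K)^N\|=\|(I+\K)^Nd_\K^j(a)\|$; since right multiplication by $I+\K$ commutes with $\mathrm{ad}(\K)=\mathrm{ad}(I+\K)$, we obtain the identity $(I+\K)^Nb=\sum_i\binom{N}{i}d_\K^i(b)(I+\K)^{N-i}$, and applying it with $b=d_\K^j(a)$, using (3) to replace each $(I+\K)^{N-i}$ by $(I+\K)^N$, and finally Chu--Vandermonde $\sum_{i+j=l}\binom{M}{j}\binom{N}{i}=\binom{M+N}{l}$ to regroup, yields exactly $\|a\|_{M+N,N}$.

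For item (7), given a Cauchy net $(c^{(n)})$ in the topology generated by the $\|\cdot\|_{M,N}$, the case $M=N=0$ is the operator norm, so $c^{(n)}\to c$ for a compact operator $c$; for each fixed $j,N$ the bound $\|d_\K^j(x)(I+\K)^N\|\le\|x\|_{j,N}$ shows $\big(d_\K^j(c^{(n)})(I+\K)^N\big)_n$ is operator-norm Cauchy, and since operator-norm convergence forces entrywise convergence, comparing matrix entries identifies its limit as $d_\K^j(c)(I+\K)^N$; hence all $\|c\|_{M,N}$ are finite, so $c\in\mathcal{K}^\infty$ by (1), and $c^{(n)}\to c$ in every $\|\cdot\|_{M,N}$. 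I expect the main obstacle to be organizational rather than conceptual: arranging the binomial sums in (4) and (6) so that the Pascal/Vandermonde collapses are transparent, and being careful throughout (especially in (1) and (7)) that the formally unbounded-looking products $d_\K^j(c)(I+\K)^N$ are legitimate bounded operators once rapid decay is available, so that passing to matrix entries and operator norms is justified.
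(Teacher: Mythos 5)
Your proposal is correct, but note that this paper does not actually prove Proposition \ref{MN-norm_basics}: it is quoted from the companion paper \cite{KMP3}, so there is no in-text argument to compare against. Judged on its own, your proof is complete and sound: the computation $d_\K(P_{ks})=(k-s)P_{ks}$, $P_{ks}(I+\K)^N=(1+s)^NP_{ks}$, together with the two-sided bound (sup of matrix entries $\le$ operator norm $\le$ Hilbert--Schmidt norm) correctly yields the equivalence in (1); Pascal's rule gives the exact equality (2) and hence (5); the contraction $(I+\K)^{-1}$ gives (3); the Leibniz expansion with the reindexing $\binom{M}{p+q}\binom{p+q}{p}=\binom{M}{p}\binom{M-p}{q}\le\binom{M}{p}\binom{M}{q}$ gives (4); and for (6) the identity $(I+\K)^N b=\sum_i\binom{N}{i}d_\K^i(b)(I+\K)^{N-i}$ (valid because $\mathrm{ad}(\K)$ commutes with right multiplication by $I+\K$), combined with (3) and Chu--Vandermonde, lands exactly on $\|a\|_{M+N,N}$. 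The completeness argument in (7) is also fine: seminorm-Cauchy implies operator-norm convergence, entrywise identification of the limits $d_\K^j(c)(I+\K)^N$, then (1) puts the limit in $\mathcal{K}^\infty$. Two small points worth making explicit if you write this up: first, for a merely bounded $a$ the expressions $d_\K^j(a)(I+\K)^N$ involve the unbounded operator $\K$, so one should state the convention that $\|a\|_{M,N}<\infty$ means these commutators and products, defined on the span of the basis vectors, extend to bounded operators (you flag this, but it deserves a sentence, since it is exactly what licenses passing to matrix entries); second, in (1) one should note that a bounded operator whose matrix entries are rapidly decaying coincides with the norm-convergent sum $\sum c_{ks}P_{ks}$, since two bounded operators with the same matrix entries are equal --- this is the step that upgrades RD entries to membership in $\mathcal{K}^\infty$ as defined.
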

This proposition implies that $\mathcal{K}^\infty$ is a Fr\'{e}chet $*$-algebra with respect to the norms, $\|\cdot\|_{M,N}$.

\subsection{Smooth Bunce-Deddens Algebras}
Next we review  smooth Bunce-Deddens algebras $B_S^\infty$ from \cite{KMP2}.
We need the following terminology.  We say a family of locally constant functions on $\Z/S\Z$ is {\it Uniformly Locally Constant}, ULC, if there exists a divisor $l$ of $S$ such that for every $f$ in the family we have
\begin{equation*}
f(x+l) = f(x)
\end{equation*}
for all $x\in\Z/S\Z$.   

We define the space of smooth elements of the Bunce-Deddens algebra, $B_S^\infty$, to be the space of elements in $B_S$ whose Fourier coefficients are ULC and whose norms are RD.  Using Fourier series those conditions can be written as:
\begin{equation*}
B_S^\infty=\left\{b= \sum_{n\in\Z}V^nm_{f_n} : \{\|f_n\|\}\textrm{ is }RD, \textrm{ there is an }l|S,\,\,V^lbV^{-l}=b\right\}\,.
\end{equation*}
It's immediate that $B_S^\infty$ is indeed a nonempty subset of $B_S$ and it was proved in \cite{KMP2} that $B_S^\infty$ is a $*$-subalgebra of $B_S$. 

Let $\delta_{\mathbb L}:B^\infty_S \to B^\infty_S$ be given by
\begin{equation*}
\delta_{\mathbb L}(b) = [\mathbb L,b]
\end{equation*}
This derivation is very fundamental below. We have the following simple relations:
\begin{equation*}
\delta_{\mathbb L}(v^n) =nV^n\ \textrm{ and }\ \delta_{\mathbb L}(m_f) =0.
\end{equation*}
This derivative is in particular used to define the following norms on $B_S^\infty$ that capture the RD property of the Fourier coefficients of elements of $B_S^\infty$.
They are defined by:
\begin{equation*}
\|b\|_P = \sum_{j=0}^P\
\begin{pmatrix}
P \\ j
\end{pmatrix}\|\delta_{\mathbb L}^{j}(b)\|\,.
\end{equation*}
The following proposition from \cite{KMP2} states the basic properties the $P$-norms. 

\begin{prop}\label{P-norm_basics}
Let $b_1$ and $b_2$ be in $B_S^\infty$, then
\begin{enumerate}
\item $\|b_1\|_{P+1} = \|b_1\|_P + \|\delta_{\mathbb L}(b_1)\|_P$ with $\|b_1\|_0:=\|b_1\|$.
\item $\|b_1b_2\|_P \le \|b_1\|_P\|b_2\|_P$.
\item $\|\delta_{\mathbb L}(b_1)\|_P\le \|b_1\|_{P+1}$.
\end{enumerate}
\end{prop}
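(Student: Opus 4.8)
\textit{Proof proposal.} The plan is to dispatch the three items in the order (1), (2), (3), since (3) will fall out of (1) for free. Throughout I use the standing facts (recorded above) that $B_S^\infty$ is closed under multiplication and that $\delta_{\mathbb{L}}$ maps $B_S^\infty$ into itself, so every expression written below again lies in $B_S^\infty$.

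\emph{Item (1).} I would expand $\|b_1\|_{P+1}=\sum_{j=0}^{P+1}\binom{P+1}{j}\|\delta_{\mathbb{L}}^{j}(b_1)\|$ and apply Pascal's identity $\binom{P+1}{j}=\binom{P}{j}+\binom{P}{j-1}$. The terms carrying $\binom{P}{j}$ reassemble into $\sum_{j=0}^{P}\binom{P}{j}\|\delta_{\mathbb{L}}^{j}(b_1)\|=\|b_1\|_P$, the $j=P+1$ contribution vanishing because $\binom{P}{P+1}=0$. The terms carrying $\binom{P}{j-1}$, after the shift $j\mapsto j+1$, become $\sum_{j=0}^{P}\binom{P}{j}\|\delta_{\mathbb{L}}^{j}(\delta_{\mathbb{L}}(b_1))\|=\|\delta_{\mathbb{L}}(b_1)\|_P$. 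The stated normalization $\|b_1\|_0:=\|b_1\|$ is exactly the $P=0$ instance, so (1) follows.

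\emph{Item (2).} Here the key input is that $\delta_{\mathbb{L}}$ is a derivation, so the Leibniz rule iterates to $\delta_{\mathbb{L}}^{n}(b_1b_2)=\sum_{k=0}^{n}\binom{n}{k}\delta_{\mathbb{L}}^{k}(b_1)\,\delta_{\mathbb{L}}^{n-k}(b_2)$, whence, by the triangle inequality and submultiplicativity of the operator norm, $\|\delta_{\mathbb{L}}^{n}(b_1b_2)\|\le\sum_{k=0}^{n}\binom{n}{k}\|\delta_{\mathbb{L}}^{k}(b_1)\|\,\|\delta_{\mathbb{L}}^{n-k}(b_2)\|$. Multiplying by $\binom{P}{n}$, summing over $0\le n\le P$, and collecting the coefficient of $\|\delta_{\mathbb{L}}^{i}(b_1)\|\,\|\delta_{\mathbb{L}}^{j}(b_2)\|$ (set $k=i$, $n-k=j$, $n=i+j$), I reduce to the purely combinatorial inequality $\binom{P}{i+j}\binom{i+j}{i}\le\binom{P}{i}\binom{P}{j}$ for all $i,j\ge0$ with $i+j\le P$; the extra terms with $i+j>P$ appearing in $\|b_1\|_P\|b_2\|_P$ are nonnegative and only help. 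Writing the left side as the multinomial coefficient $\frac{P!}{i!\,j!\,(P-i-j)!}$, this inequality is equivalent to $(P-i)!\,(P-j)!\le P!\,(P-i-j)!$, which holds because $(P-i)!/(P-i-j)!$ is a product of $j$ integers each at most $P$, hence is bounded by $P!/(P-j)!$.

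\emph{Item (3).} This is immediate from (1): $\|b_1\|_{P+1}=\|b_1\|_P+\|\delta_{\mathbb{L}}(b_1)\|_P\ge\|\delta_{\mathbb{L}}(b_1)\|_P$, using $\|b_1\|_P\ge0$.

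The only genuinely nonroutine point is the combinatorial estimate underlying (2); everything else is bookkeeping with Pascal's rule, the iterated Leibniz formula, and submultiplicativity of the $C^*$-norm.
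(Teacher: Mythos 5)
Your proof is correct and self-contained; the paper itself offers no proof of this proposition (it is quoted from \cite{KMP2}), and your route --- Pascal's identity for (1), the iterated Leibniz rule plus the multinomial inequality $\binom{P}{i+j}\binom{i+j}{i}\le\binom{P}{i}\binom{P}{j}$ for (2), and (3) as an immediate corollary of (1) --- is exactly the standard argument these norms are built for. One small wording fix in (2): saying the product $(P-i)!/(P-i-j)!$ consists of ``$j$ integers each at most $P$'' only gives the bound $P^{j}$, not $P!/(P-j)!$; what you actually need (and what does hold) is the factor-by-factor comparison $P-i-m\le P-m$ for $m=0,\dots,j-1$, which yields $(P-i)!/(P-i-j)!\le P!/(P-j)!$ and hence the combinatorial inequality.
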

It follows that we have the following useful way to describe elements in $B_S^\infty$:
\begin{equation*}
B_S^\infty = \{b\in B_S: \|b\|_M<\infty\,,\textrm{ for every }M,\textrm{ there is an }l|S,\,\,V^lbV^{-l}=b\}\,.
\end{equation*}

\subsection{Smooth Bunce-Deddens-Toeplitz Algebras}
Finally, following similar considerations for the Toeplitz algebra in \cite{KMP3}, we define the smooth Bunce-Deddens-Toeplitz algebra $A_S^\infty$ by
\begin{equation*}
A_S^\infty = \{a = T(b) + c : b\in B_S^\infty,\ c\in\mathcal{K}^\infty\}\subseteq A_S\,.
\end{equation*}
Much like with the short exact sequence for $A_S$ and $B_S$, these smooth subalgebras have the following related short exact sequence:
\begin{equation*}
0\longrightarrow \mathcal{K}^\infty \longrightarrow A_S^\infty \longrightarrow B_S^\infty\longrightarrow 0\,.
\end{equation*}
Thus, we can view the topology on $A_S^\infty$, as a vector space, in the usual way:
\begin{equation*}
A_S^\infty \cong B_S^\infty\oplus \mathcal{K}^\infty\,.
\end{equation*}
This gives $A_S$ its LF topology.

The Toeplitz map $T:B_S\to A_S$ can naturally be restricted to $B_S^\infty$ and considered as a map $T:B_S^\infty\to A_S^\infty$.  In addition, the homomorphism $\tau $ can be restricted to $A_S^\infty$ and we have a homomorphism $\tau :A_S^\infty\to B_S^\infty$.  

It is easy to verify on generators that we have
\begin{equation*}
d_\K(T(b)) = T(\delta_{\mathbb L}(b)).
\end{equation*}
As a consequence of continuity of $T$ this formula is true for all $b\in B_S^\infty$.

It remains to verify that $A_S^\infty$ is indeed a subalgebra of $A_S$. This follows from the following two propositions.

\begin{prop}\label{smooth_bdt_idealish}
Let $b$ be in $B_S^\infty$ and $c$ be in $\mathcal{K}^\infty$.  Then $T(b)c$ and $cT(b)$ are in $\mathcal{K}^\infty$.
\end{prop}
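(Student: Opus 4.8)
The plan is to show that $T(b)c$ has finite $\|\cdot\|_{M,N}$-norm for every pair of nonnegative integers $M,N$ and then to invoke Proposition~\ref{MN-norm_basics}(1); this applies because $T(b)c$ is automatically a bounded (in fact compact) operator, being the product of the bounded operator $T(b)$ with the compact operator $c$. First I would iterate the identity $d_\K(T(b)) = T(\delta_{\mathbb L}(b))$ — legitimate since $\delta_{\mathbb L}$ maps $B_S^\infty$ into itself — to obtain $d_\K^i(T(b)) = T(\delta_{\mathbb L}^i(b))$ for all $i\ge0$, each a bounded operator because $\delta_{\mathbb L}^i(b)\in B_S^\infty\subseteq B_S$ and $T$ is norm-nonincreasing. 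Combined with the higher Leibniz rule for the commutator derivation $d_\K$ this gives
\begin{equation*}
d_\K^j(T(b)c) = \sum_{i=0}^j \binom{j}{i}\, T(\delta_{\mathbb L}^i(b))\, d_\K^{j-i}(c).
\end{equation*}
Each $d_\K^{j-i}(c)$ lies in $\mathcal{K}^\infty$ by Proposition~\ref{MN-norm_basics}(5), so every summand is bounded and the Leibniz expansion, evident on the dense domain of $\K$, extends to an identity of bounded operators.

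Next, multiplying on the right by $(I+\K)^N$ and using submultiplicativity of the operator norm,
\begin{equation*}
\|d_\K^j(T(b)c)(I+\K)^N\| \le \sum_{i=0}^j \binom{j}{i}\, \|T(\delta_{\mathbb L}^i(b))\|\cdot \|d_\K^{j-i}(c)(I+\K)^N\|.
\end{equation*}
The first factor is bounded by $\|\delta_{\mathbb L}^i(b)\|\le \|b\|_i$, which is finite since $b\in B_S^\infty$ (iterate Proposition~\ref{P-norm_basics}(3)); the second factor is one of the terms defining $\|c\|_{j-i,N}$, hence finite since $c\in\mathcal{K}^\infty$. Summing over $0\le j\le M$ shows $\|T(b)c\|_{M,N}<\infty$, and as $M,N$ were arbitrary, $T(b)c\in\mathcal{K}^\infty$.

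For $cT(b)$ I would pass to adjoints: $(cT(b))^* = T(b)^* c^* = T(b^*)c^*$ by property (4) of the Toeplitz map. Since $B_S^\infty$ is a $*$-subalgebra of $B_S$ we have $b^*\in B_S^\infty$, and since $\mathcal{K}^\infty$ is closed under adjoints (Proposition~\ref{MN-norm_basics}(6)) we have $c^*\in\mathcal{K}^\infty$; the case already treated gives $T(b^*)c^*\in\mathcal{K}^\infty$, and one further application of Proposition~\ref{MN-norm_basics}(6) yields $cT(b) = (T(b^*)c^*)^*\in\mathcal{K}^\infty$. I do not expect a genuine obstacle: the argument is essentially bookkeeping with the $\|\cdot\|_{M,N}$- and $P$-norms. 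The only point deserving a sentence of care is the justification that $d_\K(T(b)) = T(\delta_{\mathbb L}(b))$ and the Leibniz rule persist for all $b\in B_S^\infty$ and for the product $T(b)c$ — which is precisely where the a priori boundedness of $d_\K^i(T(b))$ and $d_\K^{j-i}(c)$ is used.
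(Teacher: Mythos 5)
Your proposal is correct and follows essentially the same route as the paper: both rest on the identity $d_\K(T(b))=T(\delta_{\mathbb L}(b))$ together with the Leibniz rule to control $\|T(b)c\|_{M,N}$, and both handle $cT(b)$ via adjoints using $T(b^*)=T(b)^*$ and closure of $\mathcal{K}^\infty$ under $*$. The only cosmetic difference is that the paper packages the Leibniz bookkeeping as an induction on $M$ yielding the explicit estimate $\|T(b)c\|_{M,N}\le\|b\|_M\|c\|_{M,N}$ (and records $\|cT(b)\|_{M,N}\le\|b\|_{M+N}\|c\|_{M,N}$ for later use), whereas you expand the binomial sum directly and conclude mere finiteness, which suffices for the proposition.
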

\begin{proof}
Because $T(b^*)=T(b)^*$, we only need to prove $T(b)c$ is in $\mathcal{K}^\infty$.  Proceeding as in \cite{KMP3} we prove by induction on $M$ that we have the following estimate:
\begin{equation}\label{leftT(f)}
\|T(b)c\|_{M,N}\leq \|b\|_{M}\|c\|_{M,N}\,.
\end{equation}
The $M=0$ case is immediate from the definition of the norms. The inductive step is:
\begin{equation*}
\begin{aligned}
&\|T(b)c\|_{M+1,N}=\|T(b)c\|_{M,N}+\|d_\K(T(b))c+T(b)d_\K(c)\|_{M,N}\leq \\
&\leq \left(\|b\|_{M}+\|\delta_{\mathbb{L}}(b)\|_{M}\right)(\|c\|_{M,N}+\|d_\K(c)\|_{M,N})=\|b\|_{M+1}\|c\|_{M+1,N}\,.
\end{aligned}
\end{equation*}
Notice also that, again proceeding as in \cite{KMP3}, we can obtain the following inequality:
\begin{equation}\label{rightT(f)}
\|cT(b)\|_{M,N}\leq \|b\|_{M+N}\|c\|_{M,N}\,.
\end{equation}
\end{proof}

\begin{prop}\label{smooth_bd_T_almost_hom}
Let $b_1$ and $b_2$ be smooth Bunce-Deddens elements, then the following expression is a smooth compact element:
\begin{equation*}
T(b_1)T(b_2) - T(b_1b_2)\,.
\end{equation*}
\end{prop}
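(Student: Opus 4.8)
The plan is to reduce the statement to a computation on a dense set of elements and then invoke the appropriate estimates. First I would recall that $\tau$ is a homomorphism with $\tau T = \mathrm{id}$, so that $\tau\bigl(T(b_1)T(b_2) - T(b_1b_2)\bigr) = \tau(T(b_1))\tau(T(b_2)) - \tau(T(b_1b_2)) = b_1b_2 - b_1b_2 = 0$. Since the kernel of $\tau$ is $\mathcal{K}$, this already shows $T(b_1)T(b_2) - T(b_1b_2) \in \mathcal{K}$; the content of the proposition is the \emph{smoothness}, i.e.\ that this compact operator actually lies in $\mathcal{K}^\infty$. So the real task is to produce $\|\cdot\|_{M,N}$ estimates.

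The natural approach is to exploit bilinearity and continuity. The map $(b_1,b_2)\mapsto T(b_1)T(b_2) - T(b_1b_2)$ is bilinear, and on monomials $b_1 = V^{n_1}m_{f_1}$, $b_2 = V^{n_2}m_{f_2}$ one can compute the defect explicitly using the Toeplitz map properties (2) and (3): $T(V^{n_1}m_{f_1}) = U^{n_1}M_{f_1}$ for $n_1\ge0$ (and $(U^*)^{|n_1|}M_{f_1}$ for $n_1<0$), and similarly for the second factor. The product $T(V^{n_1}m_{f_1})\,T(V^{n_2}m_{f_2})$ differs from $T(V^{n_1}m_{f_1}V^{n_2}m_{f_2})$ only because $U^*U = I$ but $UU^* = I - P_0$; the discrepancy is therefore supported near the bottom of $H_+$ and involves finite sums of the matrix units $P_{ks}$ with coefficients built from the values $f_i(j)$ at small indices. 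For $b_i \in B_S^\infty$ the Fourier coefficients are ULC, so these coefficients are controlled, and the rapid decay of $\{\|f_n^{(i)}\|\}$ should give convergence of the resulting series in every $\|\cdot\|_{M,N}$ norm. Concretely I expect an estimate of the shape $\|T(b_1)T(b_2) - T(b_1b_2)\|_{M,N} \le C_{M,N}\,\|b_1\|_{M'}\,\|b_2\|_{M''}$ for suitable $M',M''$ depending on $M,N$, proved by induction on $M$ in parallel with the identity $d_\K(T(b)) = T(\delta_{\mathbb L}(b))$, exactly in the style of the proof of Proposition~\ref{smooth_bdt_idealish}.

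An alternative, cleaner route is to avoid monomials: write the defect as a telescoping or limiting expression and use that $T$ is continuous from $B_S^\infty$ to $A_S^\infty$, that multiplication in $A_S^\infty$ is continuous, and that both $B_S^\infty$ and $\mathcal{K}^\infty$ are complete. If the identity $T(b_1)T(b_2) - T(b_1b_2)\in\mathcal{K}^\infty$ can be verified when $b_1,b_2$ range over the (dense, in the LF sense) subalgebra generated by $V$ and the $m_f$'s, then continuity of all the operations involved extends it to all of $B_S^\infty$; one still needs the quantitative $\|\cdot\|_{M,N}$ bound to make the passage to the limit legitimate, so this does not truly bypass the estimate. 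I expect the main obstacle to be precisely bookkeeping the commutator terms coming from $UU^* = I - P_0$ when one or both of $n_1, n_2$ are negative: the correction is a sum of products of the form $(U^*)^{a}(\text{stuff})P_0(\text{stuff})U^{b}$, and one must check that multiplying by $(I+\K)^N$ and applying $d_\K$ repeatedly keeps everything bounded, which ultimately rests on $\K P_0 = 0$ together with the ULC condition forcing the relevant symbol values to repeat with period dividing $S$. Once that combinatorial estimate is in hand, completeness of $\mathcal{K}^\infty$ (Proposition~\ref{MN-norm_basics}(7)) finishes the proof.
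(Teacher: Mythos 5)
Your proposal is correct and follows essentially the paper's own route: split $b_1,b_2$ into positive and negative Fourier parts, observe that the only nonvanishing defect comes from the pairing $T(b_1^+)T(b_2^-)$ via $U^{-m}(U^*)^{-m}-I=-P_{<-m}$, and then bound the $\|\cdot\|_{M,N}$ norms of the resulting series, where $\|P_{<-m}\|_{0,N}=|m|^{N}$ grows only polynomially and is absorbed by the rapid decay of the Fourier coefficients, giving exactly an estimate of the shape you anticipate (cf.\ \eqref{TProd_estimate}). One minor correction: the ULC condition plays no role in that estimate --- only the rapid decay of $\{\|g_m\|\}$ and the smooth norms of $b_1^+$ enter.
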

\begin{proof}
We follow  \cite{KMP3}. Let $b_1$ and $b_2$ be in $B_S^\infty$ with the following decompositions:
\begin{equation*}
b_1=b_1^+ + b_1^-=\sum_{n\ge0}V^nm_{f_n} + \sum_{n<0}m_{f_n}V^n\quad\textrm{and}\quad b_2=b_2^+ + b_2^-=\sum_{n\ge0}V^nm_{g_n} + \sum_{n<0}m_{g_n}V^n
\end{equation*}
where $\{\|f_n\|\}$ and $\{\|g_n\|\}$ are RD sequences and $\{f_n\}$ and $\{g_n\}$ are ULC.  Since $T$ is linear we only need to study the following differences:
\begin{equation*}
\begin{aligned}
&T(b_1^+)T(b_2^+)-T(b_1^+b_2^+),  &&T(b_1^-)T(b_2^-)-T(b_1^-b_2^-)\\
&T(b_1^-)T(b_2^+)-T(b_1^-b_2^+), &&T(b_1^+)T(b_2^-)-T(b_1^-b_2^+)\,.
\end{aligned}
\end{equation*}
First consider the following:
\begin{equation*}
\begin{aligned}
T(b_1^+)T(b_2^+) - T(b_1^+b_2^+) &= \sum_{m,n\ge0}U^nM_{f_n}U^mM_{g_m} - \sum_{m,n\ge0}T\left(V^nm{f_n}V^mm_{g_m}\right) \\
&=\sum_{m,n\ge0}U^{n+m}M_{f_n\circ\varphi^m}M_{g_m} - \sum_{m,n\ge0}T\left(V^{n+m}m_{f_n\circ\varphi^m}m_{g_m}\right) \\
&=\sum_{m,n\ge0}U^{n+m}M_{f_n\circ\varphi^m}M_{g_m} - \sum_{m,n\ge0}T\left(V^{n+m}\right)M_{f_n\circ\varphi^m}M_{g_m}.
\end{aligned}
\end{equation*}
Since $T(V^{n+m}) = U^{n+m}$, so the above is zero.  A similar argument can be made for $T(b_1^-)T(b_2^-)-T(b_1^-b_2^-)$.  For the next difference we have
\begin{equation*}
T(b_1^-)T(b_2^+)-T(b_1^-b_2^+) = \sum_{m\ge0,n<0}M_{f_n}(U^*)^{-n}U^mM_{g_m}-\sum_{m\ge0,n<0}M_{f_n}T(V^nV^m)M_{g_m}.
\end{equation*}
However, since $T(V^{n+m}) = (U^*)^{-n}U^m$ since $n<0$, this difference is also zero.  Finally, for the last difference, we have
\begin{equation*}
\begin{aligned}
C:=T(b_1^+)T(b_2^-)-T(b_1^+b_2^-) &= T(b_1^+)\sum_{m<0}M_{g_m}(U^*)^{-m}-\sum_{m<0}T(b_1^+m_{g_m}V^m) \\
&=\sum_{m<0}\left(T(b_1^+m_{g_m})(U^*)^{-m}-T(b_1^+m_{g_m}V^m)\right)\\
&=-\sum_{m<0}T(b_1^+m_{g_m}V^m)P_{<-m}
\end{aligned}
\end{equation*}
where we used the following formula for $m<0$:
\begin{equation*}
U^{-m}(U^*)^{-m} - I = -P_{<-m}\,.
\end{equation*}
Clearly, $C$ is compact but we still need to prove it's smooth compact. To this end, we prove the $M,N$-norms of $C$ are finite.  A straightforward calculation gives:
\begin{equation*}
d_\K^{j}(C) = -\sum_{m<0}d_\K^{j}\left(T(b_1^+m_{g_m}V^m)P_{<-m}\right) = -\sum_{m<0}T\left(d_\K^{j}(b_1^+V^m)\right)P_{<-m}
\end{equation*}
Next we estimate norms of $C$ using $\|P_{<-m}\|_{0,N} =|m|^N$ to obtain:
\begin{equation*}
\begin{aligned}
\|d_\K^{j}(C)\|_{0,N}&\le \sum_{m<0}\sum_{l=0}^j\begin{pmatrix} j \\ l\end{pmatrix} |m|^{j-l+N}\|d_\K^{l}(b_1^+)\|\|g_m\| \\
&\le \sum_{m<0}(1+|m|)^{N+j}\left(\sum_{l=0}^j\begin{pmatrix}j\\l\end{pmatrix}\|d_\K^{l}(b_1^+)\|\right)\|g_m\|\\
&=\sum_{m<0}\|b_1^+\|_j(1+|m|)^{N+j}\|g_m\| \leq \textrm{const} \|b_1^+\|_j\|b_2^-\|_{N+j+2}\,.
\end{aligned}
\end{equation*}
Consequently, since $b_1$ and $b_2$ are in $B_S^\infty$ we get $||C||_{M,N}<\infty$.
This shows $T(b_1)T(b_2)-T(b_1b_2)$ is smooth compact. A more careful analysis following \cite{KMP3} yields the following estimate:
\begin{equation}
\label{TProd_estimate}
\|T(b_1)T(b_2)-T(b_1b_2)\|_{M,N} \le \textrm{const}\|b_1\|_j\|b_2\|_{N+j+2}\\,.
\end{equation}

\end{proof}

\section{Stability of Smooth Bunce-Deddens-Toeplitz Algebra}
The purpose of this section is to establish stability of $A_S^\infty$ under both the holomorphic functional calculus, and the smooth calculus of self-adjoint elements. It is well known that showing the former automatically implies that the $K$-Theories of $A_S^\infty$ and $A_S$ coincide \cite{Bo}.

\begin{prop}
The smooth Bunce-Deddens-Toeplitz algebra $A_S^\infty$ is closed under the holomorphic functional calculus.
\end{prop}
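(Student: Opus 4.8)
The plan is to reduce the statement to spectral invariance and then bootstrap through the extension $0\to\mathcal K^\infty\to A_S^\infty\xrightarrow{\tau}B_S^\infty\to0$, exactly as was done for related algebras in \cite{KMP2}, \cite{KMP3}. Since $A_S^\infty$ is a dense subalgebra of $A_S$ carrying a complete locally convex algebra topology in which the C$^*$-norm is a continuous seminorm (indeed $\|T(b)+c\|\le\|b\|_0+\|c\|_{0,0}$ under the identification $A_S^\infty\cong B_S^\infty\oplus\mathcal K^\infty$), it suffices, by the usual argument (cf.\ \cite{Bo}), to prove that $A_S^\infty$ is \emph{spectrally invariant} in $A_S$: every $a\in A_S^\infty$ that is invertible in $A_S$ has $a^{-1}\in A_S^\infty$. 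I will use that $A_S^\infty$ is a unital $*$-subalgebra of $A_S$, with identity $I=T(I_H)$ (Propositions \ref{smooth_bdt_idealish} and \ref{smooth_bd_T_almost_hom}), that $\ker(\tau\colon A_S^\infty\to B_S^\infty)=\mathcal K^\infty=A_S^\infty\cap\mathcal K$, and the two results from the earlier papers: $B_S^\infty$ is closed under holomorphic functional calculus in $B_S$ \cite{KMP2} (hence spectrally invariant there), and $\mathcal K^\infty$ is closed under holomorphic functional calculus in $\mathcal K$ \cite{KMP3} (equivalently, $\C I+\mathcal K^\infty$ is spectrally invariant in $\C I+\mathcal K$).

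First I would take $a\in A_S^\infty$ invertible in $A_S$. Then $\tau(a)$ is invertible in $B_S$, so $\tau(a)^{-1}\in B_S^\infty$, and I can pick a lift $y_0\in A_S^\infty$ of $\tau(a)^{-1}$, say $y_0=T(\tau(a)^{-1})$; then $y_0a\in A_S^\infty$ and $\tau(y_0a)=I$, so $y_0a-I\in\mathcal K^\infty$. I expect the main obstacle here to be that $y_0a$ itself need not be invertible: since $T$ is only a section of $\tau$ and not multiplicative, $T(\tau(a)^{-1})$ can fail to be invertible (there is an a priori index obstruction), so one cannot simply invert $y_0a$. The way around this is to improve the lift, using that $a$ is invertible.

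Concretely, since $a$ is invertible in $A_S$ and $\mathcal K$ is a closed two-sided ideal of $A_S$, right multiplication by $a$ is a linear homeomorphism of $\mathcal K$ onto $\mathcal Ka=\mathcal K$ (its inverse being right multiplication by $a^{-1}$); as $\mathcal K^\infty$ is dense in $\mathcal K$, it follows that $\mathcal K^\infty a$ is dense in $\mathcal K$. Hence there is $h\in\mathcal K^\infty$ with $\|(y_0a-I)+ha\|<1$, and I would replace $y_0$ by $y:=y_0+h\in A_S^\infty$, which is still a lift of $\tau(a)^{-1}$ and for which $k:=ya-I=(y_0a-I)+ha$ lies in $\mathcal K^\infty$ with $\|k\|<1$. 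Then $ya=I+k$ is invertible in the unital C$^*$-algebra $\C I+\mathcal K$ (Neumann series), so by spectral invariance of $\C I+\mathcal K^\infty$ we get $(I+k)^{-1}=I+k'$ with $k'\in\mathcal K^\infty$. Finally, since $a$ is invertible in $A_S$, its left inverse $(ya)^{-1}y=(I+k')y$ must equal $a^{-1}$, giving $a^{-1}=(I+k')y\in A_S^\infty$ because $A_S^\infty$ is an algebra containing $I$, $k'$ and $y$. This completes the argument.

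What would remain is routine: the continuity of the C$^*$-norm on $A_S^\infty$ in its LF topology (noted above), which justifies the initial reduction; the identity $\mathcal K^\infty=A_S^\infty\cap\mathcal K$, which is contained in the exact sequence recalled above; and verifying that every element constructed stays in $A_S^\infty$, which only uses that $T$ maps $B_S^\infty$ into $A_S^\infty$ and that $A_S^\infty$ is a unital algebra. The substantive inputs are the previously established spectral invariance of $B_S^\infty$ and of $\C I+\mathcal K^\infty$, together with the lift-improvement trick; in particular this route sidesteps any Coburn-type invertibility lemma or index computation for $A_S$.
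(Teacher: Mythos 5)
Your argument is correct, but it is not the paper's argument. The paper, after the same reduction to spectral invariance and the same appeal to \cite{KMP2} for $b^{-1}=\tau(a)^{-1}\in B_S^\infty$, proceeds completely explicitly: it writes $a^{-1}=T(b^{-1})+c'$, observes via Propositions \ref{smooth_bdt_idealish} and \ref{smooth_bd_T_almost_hom} that $c'=a^{-1}\tilde{c}$ with $\tilde{c}\in\mathcal{K}^\infty$, and then proves $c'\in\mathcal{K}^\infty$ by hand, estimating all the $\|\cdot\|_{M,N}$ norms after noting that $d_\K^j(a^{-1}\tilde{c})$ is inductively a finite sum of (bounded)$\times$(smooth compact) terms since $d_\K(a^{-1})=-a^{-1}d_\K(a)a^{-1}$ and $d_\K(a)$ is bounded for $a\in A_S^\infty$. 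You instead run a structural extension argument: spectral invariance of the quotient $B_S^\infty$ plus spectral invariance of the unitized ideal $\C I+\mathcal{K}^\infty$ (imported from \cite{KMP3}), glued by the lift-improvement trick using density of $\mathcal{K}^\infty a$ in $\mathcal{K}=\mathcal{K}a$ to replace $y_0=T(b^{-1})$ by a lift $y$ with $ya=I+k$, $k\in\mathcal{K}^\infty$, $\|k\|<1$; this neatly sidesteps the genuine obstruction that $T(b^{-1})a$ need not be invertible, and since the Neumann series is only used for invertibility in the C$^*$-algebra (smoothness of $(I+k)^{-1}-I$ comes from the cited spectral invariance, not from series convergence in the Fr\'echet topology), there is no hidden convergence gap. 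The trade-off: your route avoids all $M,N$-norm estimates and generalizes readily to any extension with spectrally invariant ideal and quotient, but it leans on the spectral invariance of smooth compacts as an external input (consistent with what the paper asserts in its K-theory section, but not used in its own proof), whereas the paper's computation is self-contained within its norm machinery and yields the explicit decomposition $a^{-1}=T(b^{-1})+c'$ directly.
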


\begin{proof}
Since $A_S^\infty$ is a complete locally convex topological vector space, it is enough to check that if $a\in A_S^\infty$ and invertible in $A_S$, then $a^{-1}\in A_S^\infty$.  Consequently, the Cauchy integral representation finishes the proof.  To this end, let $a\in A_S^\infty$ and thus $a=T(b) +c$ with $b\in B_S^\infty$ and $c\in\mathcal{K}^\infty$ and suppose $a$ is invertible in $A_S$.  Since $\tau $ is a homomorphism, $\tau (a)=b$ is invertible in $B_S^\infty$. It is proved in \cite{KMP2} that if $b\in B_S^\infty$ and invertible, then $b^{-1}\in B_S^\infty$.   Since $\mathcal{K}$ is an ideal of $A_S$ and $\tau T$ is the identity map, it follows that
\begin{equation*}
a^{-1} = T(b^{-1}) + c'
\end{equation*}
for some $c'\in\mathcal{K}$.  The proof will be complete if we can show that $c'\in\mathcal{K}^\infty$.  Notice that
\begin{equation*}
c' = a^{-1} - T(b^{-1}) = a^{-1}(I-aT(b^{-1}) = a^{-1}(I - T(b)T(b^{-1}) + cT(b^{-1}))\,.
\end{equation*}
From Propositions \ref{smooth_bdt_idealish} and \ref{smooth_bd_T_almost_hom}, we have that both $I-T(b)T(b^{-1})$ and $cT(b^{-1})$ are in $\mathcal{K}^\infty$.  Consequently, there is a $\tilde{c}\in\mathcal{K}^\infty$ such that $c'=a^{-1}\tilde{c}$.  It follows from the properties of norms on $\mathcal{K}^\infty$ that
\begin{equation}\label{inverse_est}
\|c'\|_{0,N}\le\|a^{-1}\|\|\tilde{c}\|_{0,N}<\infty\,.
\end{equation}
Computing $\delta_\K$ on $c$ we have
\begin{equation*}
\delta_\K(c') = \delta_\K(a^{-1})\tilde{c}) = -a^{-1}\delta_\K(a)a^{-1}\tilde{c} + a^{-1}\delta_\K(\tilde{c})\,.
\end{equation*}
Similarly to the proof of Proposition \ref{smooth_bdt_idealish}, we have, inductively, for any $j$ that
\begin{equation*}
\delta_\K^j(b) = \sum_ia_ib_i \quad\textrm{finite sum,}
\end{equation*}
with $a_i$ bounded and $b_i$ are smooth compact.  Using this and the estimate in equation (\ref{inverse_est}), we see that $\|c'\|_{M,N}$ is finite for all $M$ and $N$.  Thus $c'\in\mathcal{K}^\infty$, completing the proof.
\end{proof}
To prove closure under the calculus of self-adjoint elements, the approach used in \cite{KMP2} works in this setting as well. Hence, we need results regarding the growth of exponentials of elements of $B_S^\infty$ and $\mathcal{K}^\infty$. For $\mathcal{K}^\infty$, the exact result needed was proved in \cite{KMP2}. We state it here for convenience.

\begin{prop}\label{exp_of_c}
Suppose that $c\in\mathcal{K}^\infty$ is a self-adjoint smooth compact operator. Then we have an estimate:
\begin{equation*}
\|e^{ic}\|_{M,0}\leq \prod_{j=1}^M (1+\|c\|_{j,0})^{2^{M-j}}\,.
\end{equation*}
\end{prop}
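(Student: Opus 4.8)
The strategy is to estimate $\|e^{ic}\|_{M,0}$ by induction on $M$, using the recursive property $\|a\|_{M+1,0}=\|a\|_{M,0}+\|d_\K(a)\|_{M,0}$ from Proposition \ref{MN-norm_basics}(2) together with the submultiplicativity $\|ab\|_{M,0}\le\|a\|_{M,0}\|b\|_{M,0}$ from Proposition \ref{MN-norm_basics}(4). The base case $M=0$ is handled by noting that $e^{ic}$ is a unitary operator (since $c$ is self-adjoint), so $\|e^{ic}\|_{0,0}=\|e^{ic}\|=1$, which is bounded by the empty product. For the inductive step, the key observation is that $d_\K$ is a derivation, so when applied to $e^{ic}=\sum_n (ic)^n/n!$ it produces, via the Leibniz rule, a sum of terms each containing exactly one factor of $d_\K(c)$ flanked by products of $c$'s. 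I would write this compactly using the integral formula
\begin{equation*}
d_\K(e^{ic}) = i\int_0^1 e^{isc}\,d_\K(c)\,e^{i(1-s)c}\,ds,
\end{equation*}
which follows from the derivation property applied termwise.

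Applying $\|\cdot\|_{M,0}$ to this identity and using submultiplicativity, each integrand is bounded by $\|e^{isc}\|_{M,0}\,\|d_\K(c)\|_{M,0}\,\|e^{i(1-s)c}\|_{M,0}$. The factors $e^{isc}$ are not of the form $e^{i\tilde c}$ for a fixed $c$, but $sc$ is again a self-adjoint smooth compact operator with $\|sc\|_{j,0}\le\|c\|_{j,0}$ for $s\in[0,1]$, so the inductive hypothesis at level $M$ (viewing $sc$ in place of $c$) gives $\|e^{isc}\|_{M,0}\le\prod_{j=1}^M(1+\|c\|_{j,0})^{2^{M-j}}$, and the same bound for $e^{i(1-s)c}$. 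Meanwhile $\|d_\K(c)\|_{M,0}\le\|c\|_{M+1,0}$ by Proposition \ref{MN-norm_basics}(5). Hence
\begin{equation*}
\|d_\K(e^{ic})\|_{M,0}\le \|c\|_{M+1,0}\left(\prod_{j=1}^M(1+\|c\|_{j,0})^{2^{M-j}}\right)^{\!2},
\end{equation*}
and combining this with the inductive bound on $\|e^{ic}\|_{M,0}$ via property (2) yields
\begin{equation*}
\|e^{ic}\|_{M+1,0}\le \bigl(1+\|c\|_{M+1,0}\bigr)\left(\prod_{j=1}^M(1+\|c\|_{j,0})^{2^{M-j}}\right)^{\!2} = \prod_{j=1}^{M+1}(1+\|c\|_{j,0})^{2^{M+1-j}},
\end{equation*}
since doubling each exponent $2^{M-j}$ gives $2^{M+1-j}$ and the new factor $(1+\|c\|_{M+1,0})$ carries exponent $2^0=1$. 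This closes the induction.

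The main subtlety — though not a deep obstacle — is making the inductive hypothesis strong enough to apply to $e^{isc}$ rather than only to $e^{ic}$ itself; this forces one to prove, at each level, the bound uniformly over all rescalings of $c$, which is why the right-hand side is stated in terms of $\|c\|_{j,0}$ and is manifestly monotone under $c\mapsto sc$. A secondary technical point is justifying the integral representation for $d_\K(e^{ic})$ and the interchange of $d_\K$ (i.e. the commutator with the unbounded operator $\K$) with the norm-convergent exponential series; this is legitimate because $c\in\mathcal{K}^\infty$ means all the $\|d_\K^j(c)(I+\K)^N\|$ are finite, so the series for $e^{ic}$ converges in every $\|\cdot\|_{M,N}$ norm, and $d_\K$ is continuous for the Fréchet topology by Proposition \ref{MN-norm_basics}(5). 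Everything else is routine binomial bookkeeping with the exponents $2^{M-j}$.
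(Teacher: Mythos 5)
Your proposal is correct and takes essentially the same approach as the paper: the paper itself defers this estimate to \cite{KMP2}, but its proof of the companion Proposition \ref{exp_of_b} is exactly your argument---induction on $M$ via $\|a\|_{M+1,0}=\|a\|_{M,0}+\|d_\K(a)\|_{M,0}$, the Duhamel formula for the derivation applied to the exponential, submultiplicativity, and $\|d_\K(c)\|_{M,0}\le\|c\|_{M+1,0}$. Your explicit remark that the inductive hypothesis must be applied to the rescaled operators $sc$ (legitimate since the bound is monotone under $c\mapsto sc$) is a point the paper leaves implicit, but it is the same proof.
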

The second result needed is a minor adaptation of Proposition 3.4 in \cite{KMP2}. 
\begin{prop}\label{exp_of_b}
If $b\in B^\infty_S$ is self-adjoint, then we have an estimate:
\begin{equation*}
\|e^{ib}\|_{M} \leq \prod_{j=1}^M (1+\|b\|_{j})^{2^{M-j}}\,.
\end{equation*}
\end{prop}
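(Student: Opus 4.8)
\emph{Proof proposal.} The plan is to run the same induction on $M$ that proves Proposition \ref{exp_of_c} (and that appears as Proposition 3.4 in \cite{KMP2}), the two ingredients being a Duhamel-type formula for $\delta_{\mathbb{L}}(e^{ib})$ and the submultiplicativity of the $P$-norms recorded in Proposition \ref{P-norm_basics}. As a preliminary, I would first note that $e^{ib}$ really lies in $B_S^\infty$: the series $\sum_n (ib)^n/n!$ converges in each $P$-norm because $\|b^n\|_P\le\|b\|_P^{\,n}$ by Proposition \ref{P-norm_basics}(2), so $\|e^{ib}\|_P\le e^{\|b\|_P}<\infty$, and if $V^lbV^{-l}=b$ then the same divisor $l$ of $S$ works for $e^{ib}$. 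I would also record the trivial scaling fact $\|sb\|_j=s\|b\|_j\le\|b\|_j$ for $s\in[0,1]$ (the $\|\cdot\|_j$ are homogeneous norms), so that the induction hypothesis, applied to the self-adjoint elements $sb$ and $(1-s)b$, yields $\|e^{isb}\|_{M-1}\,\|e^{i(1-s)b}\|_{M-1}\le C_{M-1}^2$, where $C_{M-1}:=\prod_{j=1}^{M-1}(1+\|b\|_j)^{2^{M-1-j}}$.

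The key identity is a Duhamel formula for $\delta_{\mathbb{L}}$. Using that $\delta_{\mathbb{L}}$ is a continuous derivation and that $t\mapsto e^{itb}$ is differentiable in every $P$-norm with derivative $ib\,e^{itb}$ (differentiate the power series term by term), the $B_S^\infty$-valued function $t\mapsto\delta_{\mathbb{L}}(e^{itb})$ solves a linear inhomogeneous ODE with zero initial value, whose variation-of-parameters solution is
\begin{equation*}
\delta_{\mathbb{L}}(e^{ib})=\int_0^1 e^{i(1-s)b}\,\big(i\,\delta_{\mathbb{L}}(b)\big)\,e^{isb}\,ds ,
\end{equation*}
the integral converging in each $P$-norm. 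Taking $\|\cdot\|_{M-1}$ and bounding under the integral by submultiplicativity (Proposition \ref{P-norm_basics}(2)), together with the preliminary bound on $\|e^{isb}\|_{M-1}\|e^{i(1-s)b}\|_{M-1}$ and with $\|\delta_{\mathbb{L}}(b)\|_{M-1}\le\|b\|_M$ (Proposition \ref{P-norm_basics}(3)), I obtain $\|\delta_{\mathbb{L}}(e^{ib})\|_{M-1}\le C_{M-1}^2\,\|b\|_M$.

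The induction is then immediate. The base case $M=0$ is $\|e^{ib}\|=1$, since $e^{ib}$ is unitary, against the empty product. For the step, Proposition \ref{P-norm_basics}(1) gives $\|e^{ib}\|_M=\|e^{ib}\|_{M-1}+\|\delta_{\mathbb{L}}(e^{ib})\|_{M-1}\le C_{M-1}+C_{M-1}^2\|b\|_M$; since $C_{M-1}\ge1$ this is $\le(1+\|b\|_M)\,C_{M-1}^2$, and the bookkeeping $C_{M-1}^2=\prod_{j=1}^{M-1}(1+\|b\|_j)^{2^{M-j}}$ turns the right-hand side into $\prod_{j=1}^{M}(1+\|b\|_j)^{2^{M-j}}$, which is exactly the claimed bound.

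I expect the only real work to be in the Duhamel step: justifying, in the locally convex setting, that $t\mapsto e^{itb}$ is $P$-norm differentiable, that $\delta_{\mathbb{L}}$ (being continuous) commutes with $d/dt$, and that the $B_S^\infty$-valued integral converges in every $P$-norm. Since the statement is only a minor adaptation of Proposition 3.4 of \cite{KMP2}, these analytic points are dispatched exactly as there; everything else is bookkeeping with the submultiplicativity of the $P$-norms and the elementary inequality $x\le x^2$ for $x\ge1$.
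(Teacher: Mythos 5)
Your proposal is correct and follows essentially the same route as the paper: induction on $M$ via Proposition \ref{P-norm_basics}(1), the Duhamel formula for $\delta_{\mathbb{L}}(e^{ib})$, submultiplicativity, and $\|\delta_{\mathbb{L}}(b)\|_{M-1}\le\|b\|_M$. Your explicit treatment of the scaling $\|sb\|_j\le\|b\|_j$ (so the induction hypothesis applies to $e^{isb}$ and $e^{i(1-s)b}$) and of the membership $e^{ib}\in B_S^\infty$ only makes precise points the paper's proof leaves implicit.
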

\begin{proof} 
For $M=0$, notice that $\|e^{ib}\|_{0} = 1$. We continue by induction, utilizing part (1) of Proposition \ref{P-norm_basics}:
$$
\|e^{ib} \|_{M+1} = \|e^{ib}\|_M + \|\delta_{\mathbb{L}}(e^{ib}) \|_M.
$$
Using that 
\begin{equation*}
\delta_\mathbb{L}(e^{ib})=i\int_0^1e^{i(1-t)b}\delta_\mathbb{L}(b)e^{itb}\,dt\,,
\end{equation*}
we have the following estimate for the inductive step:
\begin{equation*}
\begin{aligned}
&\|e^{ib}\|_{M+1}\leq \|e^{ib}\|_{M}+i\int_0^1\|e^{i(1-t)b}\|_{M}\|\delta_\mathbb{L}(b)\|_{M}\|e^{itb}\|_{M}\,dt\leq\\
&\leq \prod_{j=1}^M (1+\|b\|_{j})^{2^{M-j}} +\left[\prod_{j=1}^M (1+\|b\|_{j})^{2^{M-j}}\right]^2\|\delta_\mathbb{L}(b)\|_{M}\,.
\end{aligned}
\end{equation*}
Since $\|\delta_\mathbb{L}(b)\|_{M}\leq\|b\|_{M+1}$, we have:
\begin{equation*}
\begin{aligned}
&\|e^{ib}\|_{M+1}\leq \prod_{j=1}^M (1+\|b\|_{j})^{2^{M-j}}(1+\prod_{j=1}^M (1+\|b\|_{j})^{2^{M-j}}\|b\|_{M+1})\leq\\
&\leq \prod_{j=1}^M (1+\|b\|_{j})^{2^{M-j}}\prod_{j=1}^M (1+\|b\|_{j})^{2^{M-j}}(1+\|b\|_{M+1})=\prod_{j=1}^{M+1} (1+\|b\|_{j})^{2^{M+1-j}}\,.
\end{aligned}
\end{equation*}
This establishes the inductive step and finishes the proof.
\end{proof}

\begin{theo}
The smooth Bunce-Deddens-Toeplitz algebra $A_S^\infty$ is closed under the smooth functional calculus of self-adjoint elements.\end{theo}
\begin{proof}  We need to prove that, given a self-adjoint element $a$ of $A_S^\infty$ and a smooth function $f(x)$ defined on an open neighborhood of the spectrum $\sigma(a)$ of $a$ we have $f(a)$ is in $A_S^\infty$.
It is without loss of generality to assume that $f(x)$ is smooth on $\R$ and is $L$-periodic: $f(x+L)=f(x)$ for some $L$.. Then $f(x)$ admits a Fourier series representation with rapid decay coefficients $\{f_n\}$, and hence
\begin{equation*}
f(a)=\sum_{n\in\Z}f_ne^{2\pi ina/L}
\end{equation*}
for a self-adjoint $a=T(b)+c\in A_S^\infty$. 
Thus, it remains to establish at most polynomial growth in $n$ of norms $\|e^{2\pi ina/L}\|_{M,N}$.

Notice that ${\tau }\left(e^{2\pi ina/L}\right)$ in $B_S^\infty$ is $e^{2\pi in b/L}$, which indeed grows at most polynomially in $n$, by Proposition \ref{exp_of_b}. Thus, we only need to show that the $\|\cdot\|_{M,N}$ of the difference
\begin{equation*}
e^{2\pi in(T(b)+c)/L}-T\left(e^{2\pi in b/L}\right)\in\mathcal{K}^\infty
\end{equation*}
are at most polynomially growing in $n$. 

To analyze the above, we use a version of the Duhamel's formula:
\begin{equation*}
\begin{aligned}
&e^{i(T(b)+c)}-T\left(e^{ib}\right)=\int_0^1\frac{d}{dt}\left( e^{it(T(b)+c)}T\left(e^{i(1-t)b}\right)\right)dt=\\
&=\int_0^1 e^{it(T(b)+c)}c\,T\left(e^{i(1-t)b}\right)dt +\int_0^1 e^{it(T(b)+c)}\left[T(b)T\left(e^{i(1-t)b}\right)-T\left(be^{i(1-t)b}\right)\right]dt\,.
\end{aligned}
\end{equation*}
Employing Proposition \ref{MN-norm_basics} we can estimate the norms as follows:
\begin{equation*}
\begin{aligned}
&\|e^{i(T(b)+c)}-T\left(e^{ib}\right)\|_{M,N}\leq \int_0^1 \|e^{it(T(b)+c)}\|_{M,0}\|c\,T\left(e^{i(1-t)b}\right)\|_{M,N}\,dt+\\
& +\int_0^1 \|e^{it(T(b)+c)}\|_{M,0}\|T(b)T\left(e^{i(1-t)b}\right)-T\left(be^{i(1-t)b}\right)\|_{M,N}\,dt\,.
\end{aligned}
\end{equation*}
All terms above can now be estimated using \eqref{rightT(f)},  as well as Propositions \ref{exp_of_c} and \ref{exp_of_b}.
We obtain the following bounds:
\begin{equation*}
\begin{aligned}
&\|e^{i(T(b)+c)}-T\left(e^{ib}\right)\|_{M,N}\leq \prod_{j=1}^M (1+\|b\|_{j}+\|c\|_{j,0})^{2^{M-j}}\,\|c\|_{M,N}\prod_{j=1}^{M+N} (1+\|b\|_{j})^{2^{M+N-j}}+\\
& + \textrm{const} \prod_{j=1}^M (1+\|b\|_{j}+\|c\|_{j,0})^{2^{M-j}}\, \|b\|_{M} \prod_{j=1}^{M+N+2} (1+\|b\|_{j})^{2^{M+N+2-j}}\,.
\end{aligned}
\end{equation*}
Clearly those estimates establish the desired at most polynomial growth, finishing the proof.
\end{proof}
\section{Classification of Derivations}
We begin with recalling the basic concepts from \cite{KMRSW2}.  Let $A$ be a complete locally compact topological algebra and let $d:A\to A$ be continuous derivation on $A$. Suppose that there is a continuous one-parameter family of automorphisms $\rho_\theta:A\to A$ of $A$, $\theta\in\R/\Z$.

Given $n\in\Z$, a continuous derivation $d:A\to A$ is said to be a {\it $n$-covariant derivation} if the relation 
\begin{equation*}
\rho_\theta^{-1}d\rho_\theta(a)= e^{-2\pi in\theta} d(a)
\end{equation*}
holds for all $\theta$.  When $n=0$ we say the derivation is invariant.  In this definition $A$ could be any of the following algebras: $A_S^\infty$, $B_S^\infty$, or $\mathcal{K}^\infty$ and with the appropriate one-parameter family of automorphisms $\rho_\theta^\K$ or $\rho_\theta^{\mathbb L}$.  With this definition, we point out that $\delta_{\mathbb{L}}:B_S^\infty\to B_S^\infty$ is an invariant continuous derivation as is $d_\K:A_S^\infty\to A_S^\infty$ and $d_\K:\mathcal{K}^\infty\to\mathcal{K}^\infty$.

If $d$ is a continuous derivation on $A$, the {\it $n$-th Fourier component} of $d$ is defined as: 
\begin{equation*}
d_n(a)= \int_0^1 e^{2\pi in\theta} \rho_\theta^{-1}d\rho_\theta(a)\, d\theta\,.
\end{equation*}
We have the following simple observation \cite{KMRSW2}.
\begin{prop}\label{d_n_cov}
With the above notation the $n$-th Fourier component $d_n:A_S^\infty\to A_S^\infty$ is a continuous $n$-covariant derivation.
\end{prop}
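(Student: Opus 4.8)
The plan is to verify directly from the integral formula that $d_n$ is (i) well-defined as a map $A_S^\infty\to A_S^\infty$, (ii) continuous, (iii) a derivation, and (iv) $n$-covariant, in that order. First I would address well-definedness. For fixed $a\in A_S^\infty$, the integrand $\theta\mapsto e^{2\pi in\theta}\rho_\theta^{-1}d\rho_\theta(a)$ is a continuous map from $\R/\Z$ into the Fr\'echet space $A_S^\infty$: indeed $\theta\mapsto\rho_\theta(a)$ is continuous by hypothesis on the one-parameter family, $d$ is continuous, $\theta\mapsto\rho_\theta^{-1}(\cdot)$ is continuous, and multiplication by the scalar $e^{2\pi in\theta}$ is continuous, so the composite is continuous. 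A continuous function on the compact interval $[0,1]$ with values in a complete locally convex space is Riemann (or Bochner) integrable, so $d_n(a)\in A_S^\infty$ is defined. This is where one really uses completeness of $A_S^\infty$ as a locally convex space, established via $A_S^\infty\cong B_S^\infty\oplus\mathcal{K}^\infty$.

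Next I would check continuity of $d_n$. For each defining seminorm $\|\cdot\|_{M,N}$ on $A_S^\infty$ (extended to all of $A_S^\infty$ via the direct-sum topology), the standard estimate $\|\int_0^1 x(\theta)\,d\theta\| \le \int_0^1 \|x(\theta)\|\,d\theta \le \sup_\theta\|x(\theta)\|$ gives
\begin{equation*}
\|d_n(a)\|_{M,N}\le \sup_{\theta\in[0,1]}\big\|\rho_\theta^{-1}d\rho_\theta(a)\big\|_{M,N}.
\end{equation*}
Since $\rho_\theta^{-1}$, $d$, $\rho_\theta$ are all continuous and $[0,1]$ is compact, the family of linear maps $a\mapsto\rho_\theta^{-1}d\rho_\theta(a)$ is equicontinuous in $\theta$ (uniform boundedness on the compact parameter set, using joint continuity of $(\theta,a)\mapsto\rho_\theta(a)$), so there is a seminorm $\|\cdot\|_{M',N'}$ and a constant $C$ with the right-hand side bounded by $C\|a\|_{M',N'}$. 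Hence $d_n$ is continuous.

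For the derivation property, I would pull the Leibniz rule through the integral. Fix $a,a'\in A_S^\infty$. Since each $\rho_\theta$ is an algebra automorphism and $d$ is a derivation, $\rho_\theta^{-1}d\rho_\theta(aa') = \big(\rho_\theta^{-1}d\rho_\theta(a)\big)a' + a\big(\rho_\theta^{-1}d\rho_\theta(a')\big)$ pointwise in $\theta$; multiplying both sides of the identity $d(\rho_\theta(a)\rho_\theta(a'))=d(\rho_\theta(a))\rho_\theta(a')+\rho_\theta(a)d(\rho_\theta(a'))$ by $e^{2\pi in\theta}$ and applying $\rho_\theta^{-1}$ (an automorphism, hence multiplicative) gives the stated pointwise identity. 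Then, because multiplication in the Fr\'echet algebra $A_S^\infty$ is (jointly) continuous, it commutes with the vector-valued integral: $\int_0^1 x(\theta)a'\,d\theta = \big(\int_0^1 x(\theta)\,d\theta\big)a'$ and similarly on the left. Integrating the pointwise Leibniz identity therefore yields $d_n(aa') = d_n(a)a' + a\,d_n(a')$. Finally, for $n$-covariance I would compute, for any fixed $\theta_0$, using the group law $\rho_{\theta_0}\rho_\theta=\rho_{\theta_0+\theta}$ and the change of variables $\theta\mapsto\theta-\theta_0$ (which is harmless on $\R/\Z$ since the integrand is $1$-periodic in $\theta$):
\begin{equation*}
\rho_{\theta_0}^{-1}d_n\rho_{\theta_0}(a)=\int_0^1 e^{2\pi in\theta}\rho_{\theta_0+\theta}^{-1}d\rho_{\theta_0+\theta}(a)\,d\theta = \int_0^1 e^{2\pi in(\theta'-\theta_0)}\rho_{\theta'}^{-1}d\rho_{\theta'}(a)\,d\theta' = e^{-2\pi in\theta_0}d_n(a).
\end{equation*}
The only genuinely delicate point is the interchange of the vector-valued integral with the automorphisms and with multiplication; this is routine once one knows $A_S^\infty$ is a complete locally convex topological algebra with continuous multiplication, which is exactly what the direct-sum description and Propositions \ref{MN-norm_basics} and \ref{P-norm_basics} give us. Everything else is bookkeeping, and the argument is in any case the one already recorded in \cite{KMRSW2}.
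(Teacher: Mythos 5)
Your proof is correct and is essentially the argument the paper relies on: the paper gives no proof of this proposition, quoting it from \cite{KMRSW2}, and your direct verification (vector-valued integrability in the complete locally convex space, seminorm bounds for continuity, the pointwise Leibniz identity pulled through the integral, and the change of variables for $n$-covariance) is exactly the standard one. As a small simplification, the equicontinuity step needs no appeal to joint continuity or uniform boundedness: the concrete automorphisms $\rho_\theta^\K$ and $\rho_\theta^{\mathbb L}$ are conjugations by the diagonal unitaries $e^{2\pi i\theta\K}$, $e^{2\pi i\theta\mathbb L}$ and hence preserve each seminorm $\|\cdot\|_{M,N}$ and $\|\cdot\|_P$, so $\sup_\theta\|\rho_\theta^{-1}d\rho_\theta(a)\|_{M,N}$ is bounded by the seminorm estimate for $d$ applied to $\|a\|$ itself.
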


To classify continuous derivations on $A_S^\infty$  we follow the strategy from \cite{KMRSW2}. We use the classification of derivations on $B_S^\infty$ from \cite{KMP2} and show how to lift derivations from $B_S$ to $A_S$. We handle the remaining derivations, those with range in $\mathcal{K}^\infty$, by using the Fourier decomposition components. This is the heart of the argument and will be described next.

Let $\mathcal{A}_S\subseteq A_S^\infty$ be the subspace of $A_S^\infty$ consisting of elements $a=T(b)+c$ such that $b$ has only finitely many non-zero Fourier components and $c$ has only finitely many non-zero matrix coefficients (in the standard basis).
 It was observed in \cite{KMRSW2}  that $\mathcal{A}_S$ is a dense subalgebra of $A_S$.  In turn, we note that it is also a dense subalgebra of $A_S^\infty$.

\begin{theo}\label{cont_der_ran_com_inner}
If  $d:A_S^\infty\to\mathcal{K}^\infty$ is a continuous derivation, then there is $c\in \mathcal{K}^\infty$ such that $d(a)=[c,a]$ for every $a\in A_S^\infty$. In particular, $d$ is an inner derivation.
\end{theo}
\begin{proof}
Let $d:A_S^\infty\to\mathcal{K}^\infty$ be a continuous derivation.  Let $d_n$ be the nth-Fourier component of $d$.  From Proposition \ref{d_n_cov}, $d_n$ are $n$-covariant derivations and $d_n:A_S^\infty\to\mathcal{K}^\infty$.  We only consider the case $n\ge0$ as $n<0$ can be treated similarly. All  $n$-covariant derivations $d_n:\mathcal{A}_S\to A_S$ were classified in  \cite{KMRSW2} .  Thus, we know there exists a sequence, $\{\beta_n(k)\}$, possibly unbounded in $k$, such that
\begin{equation}\label{d_n_formula}
d_n(a) = [U^n\beta_n(\K),a]
\end{equation}
for any $a\in \mathcal{A}_S$.  We are requiring here the range of $d$ to belong to $\mathcal{K}^\infty$, which places restrictions on $\{\beta_n(k)\}$. 

Let $\chi$ be a character on $\Z/S\Z$ and since $d_n(a)\in\mathcal{K}^\infty$ for any $a\in A_S^\infty$ we have
\begin{equation*}
\left\{
\begin{aligned}
d_n(U) &= U^{n+1}(\beta_n(\K+I)-\beta_n(\K)):=U^{n+1}\alpha_n(\K)\in\mathcal{K}^\infty &&\textrm{for }n\ge0\\
d_n(M_\chi) &= U^n\beta_n(\K)(1-\chi(n))\in\mathcal{K}^\infty &&\textrm{for }n\geq0\,.
\end{aligned}\right.
\end{equation*}
Since for each $n>0$ we can choose $\chi$ such that $\chi(n)\neq 1$, and thus we have $\{\alpha_n(k)\}$ and $\{\beta_n(k)\}$ are RD in $k$ for every $n>0$.  

For $n=0$, the above equation only implies that $\{\alpha_0(k)\}$ is RD in $k$.
We have the following difference equation:
\begin{equation*}
\alpha_n(k) = \beta_n(k+1)-\beta_n(k)\,.
\end{equation*}
This equation has a solution of the form
\begin{equation}\label{beta_alpha}
\beta_n(k) = -\sum_{r=k}^\infty\alpha_n(r)\,.
\end{equation}
It follows, since $\{\alpha_0(k)\}$ is RD in $k$, so is $\{\beta_0(k)\}$.  Thus $\{\beta_n(k)\}$ is RD for any $n$ and the formula \eqref{d_n_formula} extends by continuity to any $a\in A_S^\infty$.

We want to establish that $\{\beta_n(k)\}$ is RD in both $n$ and $k$. Since $d_n(U)\in\mathcal{K}^\infty$ we have that $\|d_n(U)\|_{M,N}$ are finite for all $M$ and $N$. So, for any $N$ and $j$ there exists a constant $C_{j,N}$ such that
\begin{equation*}
\|d_\K^j(d_n(U))(I+\K)^N\|\le C_{j,N}
\end{equation*}
On the other hand, consider the following calculation for $n\geq 0$:
\begin{equation*}
d_\K^j(d_n(U)) = d_\K^j(U^{n+1}\alpha_n(\K)) = (n+1)^jU^{n+1}\alpha_n(\K)
\end{equation*}
since $\alpha_n(\K)$ is diagonal.  Therefore, we have that
\begin{equation*}
(n+1)^j\|\alpha_n(\K)(I+\K)^N\|\le C_{j,N}\,.
\end{equation*}
However,
\begin{equation*}
(n+1)^{j}\|\alpha_n(\K)(I+\K)^N\| = (n+1)^{j}\underset{k}{\textrm{sup}}\left\{(1+k)^{N}|\alpha_n(k)|\right\}\,.
\end{equation*}
It follows that 
\begin{equation*}
(1+n)^j(1+k)^N|\alpha_n(k)|\le C_{j,N}
\end{equation*}
and thus $\{\alpha_n(k)\}$ is RD in both $n$ and $k$.  Consequently, by \eqref{beta_alpha}, $\{\beta_n(k)\}$ is RD in both $n$ and $k$. Therefore
\begin{equation*}
\begin{aligned}
d(a) &= \sum_{n\in\Z} d_n(a) = \sum_{n\ge0}[U^n\beta_n(\K),a] + \sum_{n<0}[\beta_n(\K)(U^*)^{-n},a] \\
&= \left[\sum_{n\ge0} U^n\beta_n(\K) + \sum_{n<0}\beta_n(\K)(U^*)^{-n},a\right]=[c,a]
\end{aligned}
\end{equation*}
where all the sums converge and $c\in\mathcal{K}^\infty$.  Thus $d$ is inner, completing the proof.
\end{proof}

To analyze general derivations  $d:A_S^\infty\to A_S^\infty$  we first notice the following.

\begin{prop}
Let $d:A_S^\infty\to A_S^\infty$ be a continuous derivation, then $d(\mathcal{K}^\infty)\subseteq\mathcal{K}^\infty$.
\end{prop}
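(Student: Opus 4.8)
The plan is to show that $\mathcal{K}^\infty$ is characterized inside $A_S^\infty$ as precisely the kernel of the quotient homomorphism $\tau:A_S^\infty\to B_S^\infty$, and then to use the fact that $\tau$ intertwines the derivation $d$ with an induced derivation on $B_S^\infty$. Concretely, write $a=T(b)+c$ with $b\in B_S^\infty$ and $c\in\mathcal{K}^\infty$; then $a\in\mathcal{K}^\infty$ if and only if $b=\tau(a)=0$, since the decomposition $a=T(b)+c$ is unique (this uniqueness is recorded in the discussion following \eqref{ASdecomp}, together with the fact that $T(b)$ is never a nonzero compact operator). So it suffices to prove that $\tau\circ d$ vanishes on $\mathcal{K}^\infty$.

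To that end, I would first argue that $\tau\circ d$ descends to a continuous derivation of $B_S^\infty$. Given $a_1\in\mathcal{K}^\infty$ and any $a_2\in A_S^\infty$, the Leibniz rule gives $d(a_1a_2)=d(a_1)a_2+a_1d(a_2)$ and $d(a_2a_1)=d(a_2)a_1+a_2d(a_1)$. Since $\mathcal{K}^\infty$ is an ideal of $A_S^\infty$ (this is exactly what Propositions \ref{smooth_bdt_idealish} and \ref{smooth_bd_T_almost_hom} establish), the terms $d(a_1)a_2$, $a_1d(a_2)$, $d(a_2)a_1$, $a_2d(a_1)$ all lie in $\mathcal{K}^\infty$ as soon as one knows $d(a_1)\in\mathcal{K}^\infty$ — which is what we are trying to prove, so this is circular if used naively. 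The clean way is: because $\tau$ is a surjective homomorphism with kernel $\mathcal{K}^\infty$, and because $\tau(d(a_1a_2))-\tau(d(a_1)a_2)-\tau(a_1d(a_2))=0$, one checks that the map $\bar d(b):=\tau(d(\tilde b))$, where $\tilde b\in A_S^\infty$ is any lift of $b$ under $\tau$, is well defined provided $\tau\circ d$ kills $\mathcal{K}^\infty$ — again circular. So instead I would take the genuinely non-circular route: show directly that for $c\in\mathcal{K}^\infty$ one has $\tau(d(c))=0$.

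The key computational step is to exploit that $\mathcal{K}^\infty$ is, as an algebra, generated (in a suitable topological sense) by its matrix units, and that these matrix units are products: $P_{ks}=P_{k0}P_{0s}$ and more usefully each $P_{ks}$ lies in the two-sided ideal generated by the single rank-one projection $P_0=I-UU^*$. Writing $c=aP_0b$ for suitable $a,b$ (first for finite-rank $c$, then passing to the limit using continuity of $d$ and density of $\mathcal{A}_S$-type elements in $\mathcal{K}^\infty$), the Leibniz rule gives $d(c)=d(a)P_0b+a\,d(P_0)b+aP_0\,d(b)$. Applying $\tau$ and using that $\tau(P_0)=0$ (the compacts are the kernel of $\tau$) kills the outer two terms, so $\tau(d(c))=\tau(a)\,\tau(d(P_0))\,\tau(b)$; hence everything reduces to showing $\tau(d(P_0))=0$. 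For this, use $P_0^2=P_0$, so $d(P_0)=d(P_0)P_0+P_0d(P_0)$, whence $\tau(d(P_0))=\tau(d(P_0))\tau(P_0)+\tau(P_0)\tau(d(P_0))=0$ since $\tau(P_0)=0$. That closes the argument, and continuity of $d$ together with Proposition \ref{MN-norm_basics}(7) (completeness of $\mathcal{K}^\infty$) upgrades from finite-rank $c$ to all of $\mathcal{K}^\infty$.

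I expect the main obstacle to be the density/limiting argument: one must be sure that every $c\in\mathcal{K}^\infty$ is approximated in the LF topology of $A_S^\infty$ by finite-rank operators that are themselves manipulable as $aP_0b$, and that $d$ — being continuous for that topology — commutes with the limit. This is where the explicit structure of $\mathcal{K}^\infty$ via rapidly decaying matrix coefficients and the norm estimates of Proposition \ref{MN-norm_basics} do the real work; the algebraic core ($\tau(P_0)=0$ forces $\tau(d(P_0))=0$ by idempotency) is short.
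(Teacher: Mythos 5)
Your argument is correct, but it closes the proof by a different mechanism than the paper does. The paper's proof is a one-liner on top of the ideal property of $\mathcal{K}^\infty$: by continuity and density of finite linear combinations of the matrix units $P_{ks}$ in $\mathcal{K}^\infty$, it suffices to check $d(P_{ks})\in\mathcal{K}^\infty$; writing $P_{ks}=P_{kr}P_{rs}$ and applying Leibniz, each term is a product of a smooth compact operator with an element of $A_S^\infty$, hence lies in $\mathcal{K}^\infty$ by Proposition \ref{smooth_bdt_idealish} (together with $\mathcal{K}^\infty$ being an algebra). You instead characterize $\mathcal{K}^\infty$ inside $A_S^\infty$ as $\ker\tau$ (via uniqueness of the decomposition \eqref{ASdecomp}) and show $\tau(d(P_{ks}))=0$ by writing $P_{ks}=U^kP_0(U^*)^s$, applying Leibniz and $\tau(P_0)=0$, and killing the remaining middle term through the idempotent trick $d(P_0)=d(P_0)P_0+P_0d(P_0)\Rightarrow\tau(d(P_0))=0$. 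What your route buys: it is purely algebraic at its core and does not invoke the norm estimates behind the ideal property at all, needing only that $\tau$ is a continuous homomorphism on $A_S^\infty$ (it is: in the splitting $A_S^\infty\cong B_S^\infty\oplus\mathcal{K}^\infty$ it is the projection onto the first summand) and that $\mathcal{K}^\infty$ is closed in $A_S^\infty$. What the paper's route buys: given that Proposition \ref{smooth_bdt_idealish} is already established in Section 3, the conclusion is immediate, with no detour through $\tau$.

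Two small cleanups on your write-up. First, the density/limiting step you flag as the ``main obstacle'' is not really one, and it is the same step the paper itself uses implicitly: truncating the rapidly decaying matrix of $c\in\mathcal{K}^\infty$ gives finite linear combinations of matrix units converging to $c$ in every $\|\cdot\|_{M,N}$ norm, and continuity of $d$ and of $\tau$, plus closedness of $\ker\tau\cap A_S^\infty=\mathcal{K}^\infty$, finishes it; note also that a finite-rank element is a finite sum $\sum_{k,s}c_{ks}U^kP_0(U^*)^s$ rather than a single product $aP_0b$, which is harmless by linearity. Second, the long middle paragraph about circular variants could simply be deleted: your final argument is non-circular, since it never uses $d(\mathcal{K}^\infty)\subseteq\mathcal{K}^\infty$ and never needs $\tau\circ d$ to descend to a derivation on $B_S^\infty$ -- it only uses the $\tau$-twisted Leibniz identity $\tau(d(xy))=\tau(d(x))\tau(y)+\tau(x)\tau(d(y))$, which holds automatically.
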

\begin{proof}
Since $\mathcal{K}^\infty$ is generated by the system of units $\{P_{ks}\}$ and since $d$ is continuous we only need to verify that $d(P_{ks})$ is  in $\mathcal{K}^\infty$.
Since $P_{ks}=P_{kr}P_{rs}$, by the Leibniz rule we have that
\begin{equation*}
d(P_{ks})  = P_{kr}d(P_{rs}) + d(P_{kr})P_{rs}\,.
\end{equation*}
Since the right-hand side is clearly in $\mathcal{K}^\infty$, the claim follows.
\end{proof}

It follows from this proposition that any continuous derivation  $d:A_S^\infty\to A_S^\infty$ defines a continuous derivation on $B_S^\infty$, which is isomorphic to the factor algebra $A_S^\infty/\mathcal{K}^\infty$. We use this observation in the proof of the following main result of this section.

\begin{theo}
Let $d:A_S^\infty\to A_S^\infty$ be any continuous derivation.  Then there exist: a constant $\gamma$, $b\in B_S^\infty$ and $c\in\mathcal{K}^\infty$ such that:
\begin{equation*}
d = \gamma d_\K + [T(b)+c,\cdot].
\end{equation*}
\end{theo}
\begin{proof}
Let $d:A_S^\infty\to A_S^\infty$ be a continuous derivation and define a derivation $\delta:B_S^\infty\to B_S^\infty$ by
\begin{equation*}
\delta(a+\mathcal{K}^\infty) = d(a) + \mathcal{K}^\infty\,.
\end{equation*}
In other words, $\delta$ is the class of $d$ in the factor algebra $A_S^\infty/\mathcal{K}^\infty\cong B_S^\infty$. The continuity of $d$ implies the continuity of $\delta$.  But all continuous derivations $\delta:B_S^\infty\to B_S^\infty$ were classified in \cite{KMP2}.  Therefore, by that paper, there exists a constant $\gamma$ such that
\begin{equation*}
\delta = \gamma\delta_{\mathbb L} + \tilde{\delta}
\end{equation*}
where $\tilde{\delta}$ is inner.  Thus there exists a $b\in B_S^\infty$ such that $\tilde{\delta} = [b,\cdot]$. 

Next notice that $[T(b), \cdot]$ is an inner derivation on $A_S^\infty$ whose class in $B_S^\infty$ is precisely $[b,\cdot]$.  
Define a derivation $\tilde{d}:A_S^\infty\to A_S^\infty$ by
\begin{equation*}
\tilde{d} = d - cd_\K - [T(b),\cdot]\,.
\end{equation*}
Since the class of $d_\K$ in is $\delta_{\mathbb L}$, we have that $\tilde{d}:A_S^\infty\to\mathcal{K}^\infty$ and hence by Theorem \ref{cont_der_ran_com_inner}, $\tilde{d}=[c,\cdot]$ for some $c\in\mathcal{K}^\infty$. This concludes the proof.
\end{proof}

\section{K-Theory and $K$-Homology}
Since $\mathcal{K}^\infty, A_S^\infty, B_S^\infty$ are closed under the holomorphic functional calculus, each inclusion induces an isomorphism in $K$-Theory. Using this fact, along with the $6$-term exact sequence \cite{RLL} induced by the short exact sequence of smooth subalgebras, we compute the $K$-Theory of $A_S^\infty$. We then make use of the Universal Coefficient Theorem \cite{RS} to compute the $K$-Homology of $A_S$.  
\subsection{K Theory}
Recall the short exact sequence 
\begin{equation*}
0\longrightarrow \mathcal{K}^\infty \longrightarrow A^\infty_S \longrightarrow B^\infty_S \longrightarrow 0
\end{equation*}
of smooth subalgebras. This induces the following $6$-term exact sequence in $K$-Theory: 
\begin{equation*}
\begin{tikzcd}
 K_0(\mathcal{K}^\infty) \arrow{r}  & K_0(A^\infty_S) \arrow{r}{K_0(\tau )} & K_0(B^\infty_S) \arrow{d}{\exp}    \\
 K_1(B^\infty_S) \arrow{u}{\textrm{ind}} & K_1(A_S^\infty) \arrow{l}{K_1(\tau )} & K_1(\mathcal{K}^\infty) \arrow{l}
\end{tikzcd}
\end{equation*}
For details regarding the $K$-Theory of $B^\infty_S$, see \cite{KMP2}. Since the generating unitary $V$ in $B^\infty_S$ lifts to the partial isometry $U$, it follows that 
$$\textrm{ind}([V]_1) = [I - U^*U]_0 - [I - UU^*]_0 = -[P_{00}],$$ 
which generates $K_0(\mathcal{K}^\infty)$. Hence, the index map is an isomorphism. By exactness, it follows that $K_1(\tau )$ is the trivial map. Since $K_1(\mathcal{K}^\infty) = 0 $, by exactness $K_1(\tau )$ is also injective, and hence $K_1(A^\infty_S) = 0$. Since $\exp$ is trivial, by exactness $K_0(\tau )$ is surjective. But again, since $\textrm{ind}$ is an isomorphism, it follows that the map $K_0(\mathcal{K}^\infty) \to K_0(A^\infty_S)$ is trivial. Hence, $K_0(\tau )$ is injective as well. Using the computation done in \cite{KMP2}, it follows that we have: 
$$
K_0(A^\infty_S) \cong G_S\ \textrm{ where }\ G_S = \{ k/l \in \mathbb{Q} : k \in \mathbb{Z}, l | S \}. 
$$
Let us summarize the results in the following proposition. 
\begin{prop}
The $K$-Theory of $A_S$ is given by 
$$K_0(A_S) \cong G_S\ \textrm{ and }\ K_1(A_S) \cong 0.$$
\end{prop}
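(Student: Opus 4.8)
The plan is to transfer the computation from the C$^*$-algebra $A_S$ to its smooth subalgebra $A_S^\infty$ and then run the six-term exact sequence. First I would invoke the stability of $A_S^\infty$ under the holomorphic functional calculus, established in the previous section: by the standard spectral-invariance argument of \cite{Bo}, the inclusion $A_S^\infty\hookrightarrow A_S$ induces an isomorphism $K_*(A_S^\infty)\cong K_*(A_S)$, and likewise $K_*(\mathcal{K}^\infty)\cong K_*(\mathcal{K})$ and $K_*(B_S^\infty)\cong K_*(B_S)$. Thus it suffices to compute $K_*(A_S^\infty)$ and then transport the answer.

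Next I would apply the six-term exact sequence in $K$-theory to the short exact sequence $0\to\mathcal{K}^\infty\to A_S^\infty\to B_S^\infty\to 0$, feeding in the inputs $K_0(\mathcal{K}^\infty)\cong\mathbb{Z}$ generated by $[P_{00}]_0$, $K_1(\mathcal{K}^\infty)=0$, and the computation of \cite{KMP2} that $K_0(B_S^\infty)\cong G_S$ and $K_1(B_S^\infty)\cong\mathbb{Z}$ with generator $[V]_1$. The key point is the index map: the generating unitary $V$ of $K_1(B_S^\infty)$ lifts to the isometry $U\in A_S^\infty$, for which $I-U^*U=0$ and $I-UU^*=P_{00}$, so $\mathrm{ind}([V]_1)=-[P_{00}]_0$ is a generator of $K_0(\mathcal{K}^\infty)$; hence the index map is an isomorphism. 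The rest is a diagram chase: exactness forces $K_1(\tau)$ to be the zero map, and since $K_1(\mathcal{K}^\infty)=0$ it is also injective, whence $K_1(A_S^\infty)=0$; the exponential map is zero, so $K_0(\tau)$ is surjective, and since the index map is onto, the map $K_0(\mathcal{K}^\infty)\to K_0(A_S^\infty)$ vanishes, making $K_0(\tau)$ injective as well, so $K_0(A_S^\infty)\cong K_0(B_S^\infty)\cong G_S$. Transporting back along the $K$-theory isomorphisms induced by the smooth inclusions gives $K_0(A_S)\cong G_S$ and $K_1(A_S)\cong 0$.

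I expect the only genuine obstacle to be bookkeeping rather than mathematics: one must be careful that the class lifted by $U$ is exactly the stated generator of $K_1(B_S^\infty)$, so that the index computation produces an honest generator of $K_0(\mathcal{K}^\infty)$ and not a proper multiple of one, and one must import the $K$-theory of $B_S^\infty$ from \cite{KMP2} in precisely this normalized form. Beyond the stability results already proved, no new estimates are needed.
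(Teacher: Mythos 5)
Your proposal is correct and follows essentially the same route as the paper: both pass to the smooth short exact sequence $0\to\mathcal{K}^\infty\to A_S^\infty\to B_S^\infty\to 0$, compute the index map via the lift of $V$ to the isometry $U$ (giving $\mathrm{ind}([V]_1)=-[P_{00}]_0$, a generator of $K_0(\mathcal{K}^\infty)$), run the same diagram chase to get $K_1(A_S^\infty)=0$ and $K_0(A_S^\infty)\cong G_S$, and transfer to $A_S$ via closure under the holomorphic functional calculus. The only difference is cosmetic ordering of when the spectral-invariance step is invoked.
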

\subsection{$K$-Homology}
The Universal Coefficient Theorem of Rosenberg and Schochet \cite{RS} states that we have two exact sequences: 
\begin{equation*}
    \begin{tikzcd}
    0 \arrow{r} & \Ext^1_\Z(K_1(A_S), \Z) \arrow{r} & K^0(A_S) \arrow{r} & \Hom(K_0(A_S),\Z) \arrow{r} & 0,
    \end{tikzcd}
\end{equation*}
and
\begin{equation*}
    \begin{tikzcd}
    0 \arrow{r} & \Ext^1_\Z(K_0(A_S), \Z) \arrow{r} & K^1(A_S) \arrow{r} & \Hom(K_1(A_S),\Z) \arrow{r} & 0,
    \end{tikzcd}
\end{equation*}
where in the above, we have used the identification $KK^i(A_S, \mathbb{C}) = K^i(A_S)$. From the first sequence, it is clear that 
$$\Ext^1_\Z(K_1(A_S), \Z) \cong 0.$$
In \cite{KMP2} it was shown that 
$$\Hom(K_0(A_S),\Z) \cong 0.$$ 
Hence, we have $K^0(A_S) = 0$. From the second sequence, it is immediate that 
$$K^1(A_S) \cong \Ext^1_\Z(K_0(A_S), \Z) \cong K^1(B_S),$$ 
where the last isomorphism is derived in \cite{KMP2}.  This group was computed in \cite{KMP2} to be isomorphic to $(\Z / S \Z ) / \Z.$ This reference also contains an explicit description of the precise subgroup being modded out. In fact, this subgroup turns out to be the natural dense copy of $\Z \subseteq \Z / S \Z$. We summarize the above computations in the following proposition. 
\begin{prop}
The $K$-Homology of $A_S$ is given by 
$$K^0(A_S) \cong 0\ \textrm{ and }\ K^1(A_S) \cong (\Z / S \Z) / \Z.$$ 
\end{prop}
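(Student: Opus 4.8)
The plan is to substitute the $K$-theory of $A_S$ just obtained, namely $K_0(A_S)\cong G_S$ and $K_1(A_S)\cong 0$, into the two Universal Coefficient sequences of Rosenberg and Schochet displayed above and read the answer off each one.

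For $K^0(A_S)$, the left-hand term of the first sequence is $\Ext^1_\Z(K_1(A_S),\Z)=\Ext^1_\Z(0,\Z)=0$, so that sequence degenerates to an isomorphism $K^0(A_S)\cong\Hom(K_0(A_S),\Z)=\Hom(G_S,\Z)$. I would then observe that $\Hom(G_S,\Z)=0$: for every divisor $l\mid S$ the group $G_S$ is divisible by $l$, so for any homomorphism $\varphi\colon G_S\to\Z$ and any $x\in G_S$ the integer $\varphi(x)$ is divisible by arbitrarily large $l$, forcing $\varphi(x)=0$. This is precisely the vanishing $\Hom(K_0(A_S),\Z)\cong 0$ recorded in \cite{KMP2}. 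Hence $K^0(A_S)\cong 0$.

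For $K^1(A_S)$, the right-hand term of the second sequence is $\Hom(K_1(A_S),\Z)=\Hom(0,\Z)=0$, so it collapses to an isomorphism $K^1(A_S)\cong\Ext^1_\Z(K_0(A_S),\Z)=\Ext^1_\Z(G_S,\Z)$. The one step that is not formal is the identification $\Ext^1_\Z(G_S,\Z)\cong(\Z/S\Z)/\Z$, and this is where the real content sits. I would extract it from the short exact sequence of abelian groups $0\to\Z\to G_S\to G_S/\Z\to 0$: its long exact $\Ext$-sequence, together with $\Hom(G_S,\Z)=0$, $\Hom(\Z,\Z)=\Z$, and $\Ext^1_\Z(\Z,\Z)=0$, yields a short exact sequence $0\to\Z\to\Ext^1_\Z(G_S/\Z,\Z)\to\Ext^1_\Z(G_S,\Z)\to 0$. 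Since $G_S/\Z$ is torsion, $\Ext^1_\Z(G_S/\Z,\Z)\cong\Hom(G_S/\Z,\Q/\Z)$ is its Pontryagin dual, and a prime-by-prime computation identifies the latter with $\Z/S\Z$, under which the distinguished copy of $\Z$ coming from the connecting map is the dense one from \eqref{ZinZmodSZ}; quotienting gives $\Ext^1_\Z(G_S,\Z)\cong(\Z/S\Z)/\Z$. Alternatively one simply invokes \cite{KMP2}, where this $\Ext$ group is computed to be $(\Z/S\Z)/\Z$ with the quotient taken by exactly this dense copy of $\Z$, and which via the UCT for $B_S$ is there identified with $K^1(B_S)$.

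The bookkeeping with the two UCT sequences is immediate once $K_*(A_S)$ is in hand, so the only genuine obstacle is the $S$-adic computation of $\Ext^1_\Z(G_S,\Z)$ together with the identification of the modded-out subgroup as the natural dense $\Z$; rather than redo that here I would import it directly from \cite{KMP2}.
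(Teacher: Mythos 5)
Your proposal is correct and follows essentially the same route as the paper: substitute $K_0(A_S)\cong G_S$ and $K_1(A_S)\cong 0$ into the two UCT sequences, use $\Hom(G_S,\Z)=0$ to get $K^0(A_S)\cong 0$, and identify $K^1(A_S)\cong\Ext^1_\Z(G_S,\Z)\cong(\Z/S\Z)/\Z$ by importing the computation (and the identification of the modded-out dense copy of $\Z$) from \cite{KMP2}. Your optional direct verification of the $\Ext$ group via $0\to\Z\to G_S\to G_S/\Z\to 0$ and Pontryagin duality of the torsion quotient is sound, but it is an embellishment rather than a different approach.
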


\end{document}